%

\documentclass[aop,preprint]{imsart}

\RequirePackage[OT1]{fontenc}
\RequirePackage{amsthm,amsmath}
\RequirePackage[numbers]{natbib}
\RequirePackage[colorlinks,citecolor=blue,urlcolor=blue]{hyperref}

\usepackage[left=1in,right=1in,bottom=1in,top=1in]{geometry}
\usepackage{amsmath}
\usepackage{amsfonts}
\usepackage{amssymb}
\usepackage{amsthm}
\usepackage{young}
\usepackage[vcentermath]{youngtab}
\usepackage{tikz}

\usepackage{color}
\usepackage[leqno]{amsmath}
\usepackage{shuffle}
\usepackage{tikz}
\usetikzlibrary{arrows}
\usepackage{enumitem}

\makeatletter
\newtheorem*{rep@theorem}{\rep@title}
\newcommand{\newreptheorem}[2]{%
\newenvironment{rep#1}[1]{%
 \def\rep@title{#2 \ref{##1}}%
 \begin{rep@theorem}}%
 {\end{rep@theorem}}}
\makeatother

\usepackage{graphicx}





\newcommand{\diag}{\mbox{diag}}

\newtheorem{theorem}{Theorem}
\newreptheorem{theorem}{Theorem}
\newtheorem{lemma}[theorem]{Lemma}
\newtheorem{example}[theorem]{Example}

\newtheorem{proposition}[theorem]{Proposition}
\newtheorem{corollary}[theorem]{Corollary}

\DeclareMathOperator{\Diag}{diag}
\DeclareMathOperator{\eig}{eig}

\DeclareMathOperator{\TV}{T.V.}

%
%
%
\def\unprotectedboldentry#1{\textcolor{Red}{\large{#1}}}
\def\boldentry{\protect\unprotectedboldentry}
\usepackage{tikz}
\usetikzlibrary{calc}
\usepackage{ifthen}
\newcommand{\tikztableauinternal}[1]{
    \def\newtableau{#1}
    \coordinate (x) at (-0.5,0.5);
    \coordinate (y) at (-0.5,0.5);
    \foreach \row in \newtableau {
        \coordinate (x) at ($(x)-(0,1)$);
        \coordinate (y) at (x);
        \foreach \entry in \row {
            \ifthenelse{\equal{\entry}{X}}
               {
                \node (y) at ($(y) + (1,0)$) {};
                \fill[color=gray!10] ($(y)-(0.5,0.5)$) rectangle +(1,1);
                \draw[color=gray, dotted] ($(y)-(0.5,0.5)$) rectangle +(1,1);
               }
               {
                \ifthenelse{\equal{\entry}{\boldentry X}}
                   {
                    \node (y) at ($(y) + (1,0)$) {};
                    \fill[color=gray] ($(y)-(0.5,0.5)$) rectangle +(1,1);
                    \draw ($(y)-(0.5,0.5)$) rectangle +(1,1);
                   }
                   {
                    \node (y) at ($(y) + (1,0)$) {\entry};
                    \draw ($(y)-(0.5,0.5)$) rectangle +(1,1);
                   }
               }
            }
        }
}

\newcommand{\tikztableau}[2][scale=0.6,every node/.style={font=\small}]{
    \begin{array}{c}
    \begin{tikzpicture}[#1]
        \tikztableauinternal{#2}
    \end{tikzpicture}
    \end{array}}
\newcommand{\tikztableausmall}[1]{\tikztableau[scale=0.45,every node/.style={font=\rm\small}]{#1}}



\newlength\cellsize \setlength\cellsize{12\unitlength}
\savebox2{%
\begin{picture}(12,12)
\put(0,0){\line(1,0){12}}
\put(0,0){\line(0,1){12}}
\put(12,0){\line(0,1){12}}
\put(0,12){\line(1,0){12}}
\end{picture}}

\newcommand\cellify[1]{\def\thearg{#1}\def\nothing{}%
\ifx\thearg\nothing
\vrule width0pt height\cellsize depth0pt\else
\hbox to 0pt{\usebox2\hss}\fi%
\vbox to 12\unitlength{
\vss
\hbox to 12\unitlength{\hss$#1$\hss}
\vss}}

\newcommand\tableau[1]{\vtop{\let\\=\cr
\setlength\baselineskip{-12000pt}
\setlength\lineskiplimit{12000pt}
\setlength\lineskip{0pt}
\halign{&\cellify{##}\cr#1\crcr}}}




\startlocaldefs
\endlocaldefs

\begin{document}

\begin{frontmatter}

\title{Cutoff for random to random card shuffle}
\runtitle{Cutoff for random to random card shuffle}


\begin{aug}
\author{\fnms{Megan} \snm{Bernstein}\thanksref{t1}\ead[label=e1]{bernstein@math.gatech.edu}}
\and
\author{\fnms{Evita} \snm{Nestoridi}
\ead[label=e3]{exn@princeton.edu}
}

\thankstext{t1}{The first author is partially supported by NSF Grant $DMS1344199$}

\affiliation{Georgia Institute of Technology and Princeton University}

\address{Email:bernstein@math.gatech.edu}

\address{Email:exn@princeton.edu}
\end{aug}

\begin{abstract}In this paper, we use the eigenvalues of the random to random card shuffle to prove a sharp upper bound for the total variation mixing time. Combined with the lower bound due to Subag, we prove that this walk exhibits cutoff at $\frac{3}{4} n \log n - \frac{1}{4}n\log\log{n}$ with window of order $n$, answering a conjecture of Diaconis.
\end{abstract}

\begin{keyword}[class=MSC]
\kwd[Primary ]{60J10}
\kwd{20C30, 05E99}
\end{keyword}

\begin{keyword}
\kwd{cutoff}
\kwd{card shuffle}
\kwd{random-to-random}
\kwd{mixing times}
\end{keyword}

\end{frontmatter}

\section{Introduction}\label{intro}
One step of the random to  random card shuffle consists of picking a card and a position of the deck uniformly and independently at random and moving that card to that position. It was introduced by Diaconis and Saloff-Coste \cite{DS}, who proved that the mixing time is $O(n \log n)$. Uyemura-Reyes \cite{JC} proved that the mixing time is between $\frac{1}{2} n \log n$ and $4n \log n$. Saloff-Coste and Z{\'u}{\~n}iga \cite{LZ} improved the upper bound constant by showing that the mixing time is bounded by above by $2 n \log n$. Subag \cite{Lower} proved a lower bound for the mixing time of the form $\frac{3}{4} n \log n - \frac{1}{4}n\log{n} -cn$. The best previously known bound for the total variation distance mixing time was $1.5324n \log n$ and is due to Morris and Qin \cite{MorrisQin}. 

In this paper we prove that the random to random shuffle exhibits cutoff at $\frac{3}{4}n\log n- \frac{1}{4}n\log\log{n} $ with window of order $n$, which answers a conjecture of Diaconis \cite{DiaConj}. Diaconis's conjecture of cutoff after $(1\pm o(1))\frac{3}{4}n\log{n}$ steps was based on a partial diagonalization of the transition matrix by Uyemura-Reyes in his thesis \cite{JC}. 

Recently, Dieker and Saliola \cite{DSaliola} used representation theory arguments to find the eigenvalues and eigenvectors of this card shuffle. We use this information to give a tight bound for the total variation distance, defined as
$$||P_{x}^{*t}-\pi ||_{\TV}= \frac{1}{2} \sum_{g \in S_n} |P^{*t}_{x}(g)- \pi(g)|,$$
where $P_{x}^{*t}(g)$ is the probability of moving from $x\in S_n$ to $g \in S_n$ after $t$ steps and $\pi$ is the uniform measure on $S_n$.
Our first result is:
 \begin{theorem}\label{rtr}
For the random to random card shuffle, if $t=  \frac{3}{4}n \log n - \frac{1}{4}n\log\log{n}+c n$ with $c \geq 2$ and $n$ is sufficiently large, then
\begin{equation*}
||P_{x}^{*t}-\pi ||_{\TV} \leq  e^{-c},
\end{equation*}
for every $x\in S_n$.
\end{theorem}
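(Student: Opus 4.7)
The plan is to apply the Diaconis--Shahshahani upper bound lemma together with the full spectral data of Dieker--Saliola. Since random-to-random is driven by a symmetric probability measure $\mu$ on $S_n$ (a single move and its inverse have equal probability), is reversible with uniform stationary distribution, and satisfies the left-invariance property that $\|P^{*t}_x-\pi\|_{\TV}$ does not depend on $x$, the lemma yields
\begin{equation*}
4 \, \|P^{*t}_x-\pi\|_{\TV}^2 \;\leq\; \sum_{\lambda \vdash n,\ \lambda \neq (n)} d_\lambda \,\|\hat\mu(\rho^\lambda)^t\|_{HS}^2.
\end{equation*}
By the Dieker--Saliola diagonalization, within each irrep $\rho^\lambda$ the eigenvalues of $\hat\mu(\rho^\lambda)$ are indexed by subpartitions $\nu\subseteq \lambda$ with $\lambda/\nu$ a horizontal strip; each eigenvalue $e_{\lambda,\nu}$ admits an explicit formula in terms of contents of the boxes of $\lambda/\nu$, and occurs with multiplicity $f^{\lambda/\nu}$, the number of standard Young tableaux of skew shape $\lambda/\nu$. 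The task thus reduces to proving
\begin{equation*}
\sum_{\lambda \neq (n)} d_\lambda \sum_{\nu \subseteq \lambda} f^{\lambda/\nu}\, e_{\lambda,\nu}^{\,2t} \;\leq\; 4 e^{-2c}
\end{equation*}
at $t = \tfrac{3}{4} n\log n - \tfrac{1}{4} n\log\log n + cn$ for $c \geq 2$.

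The strategy for this sum is to split $\lambda$ by shape. The dominant contribution comes from partitions close to $(n)$, in particular hooks $(n-k,1^{k-1})$ and two-row shapes $(n-k,k)$ with $k$ small, whose eigenvalues are the closest to $1$ and already drive Subag's lower bound. For such $\lambda$, both $d_\lambda$ and $f^{\lambda/\nu}$ are polynomials in $n$ of degree controlled by $k$ and $|\lambda/\nu|$, while $e_{\lambda,\nu}$ can be expanded as $1 - \alpha(\lambda,\nu)/n + O(1/n^2)$ with $\alpha(\lambda,\nu)$ explicit; inserting $t$ and summing over $k$ and over $\nu$ produces a convergent series dominated by $e^{-2c}/4$. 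For the remaining ``bulk'' partitions the plan is to use cruder bounds: a uniform gap $e_{\lambda,\nu} \leq 1 - c_0/n$ on all nontrivial eigenvalues, the Plancherel identity $\sum_\lambda d_\lambda^2 = n!$, and hook-length estimates for $d_\lambda$ and $f^{\lambda/\nu}$. A spectral duality $\lambda \leftrightarrow \lambda'$, inherited from tensoring by the sign representation, halves the work by restricting to partitions with first row at least $n/2$.

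The principal obstacle is extracting the $-\tfrac{1}{4} n \log\log n$ shift inside the window. Any slack of size $\log n$ anywhere in the dimension-eigenvalue bookkeeping would destroy this refinement and produce only $\tfrac{3}{4} n \log n + O(n)$. The correction arises from a delicate balance between the number of pairs $(\lambda,\nu)$ of a given ``rank'' $k$ and the $n^{2k}$-growth of their associated dimensional factors, requiring both an eigenvalue expansion accurate to order $1/n^2$ and a near-exact count of contributing pairs at each rank. Coordinating these estimates over the entire near-trivial regime, so that the many slightly larger partitions do not swamp the leading contribution, is the technical heart of the argument.
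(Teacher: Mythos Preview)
Your framework is right --- the $\ell^2$/upper-bound-lemma reduction plus the Dieker--Saliola spectral data --- but there are two concrete problems.

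First, the multiplicity you cite is wrong: within $\rho^\lambda$ the eigenvalue indexed by $\mu$ has multiplicity $d^\mu$, the number of \emph{desarrangement} tableaux of shape $\mu$, not $f^{\lambda/\mu}$. These disagree already for small shapes (e.g.\ $\mu=[1,1,1]$ has $d^\mu=0$ while $f^{[n-2,1,1]/[1,1,1]}=1$), and $\sum_{\mu}f^{\lambda/\mu}\neq d_\lambda$ in general, so your sum does not even have total mass $n!$. The claimed $\lambda\leftrightarrow\lambda'$ duality also fails here: transposition sends horizontal strips to vertical strips, and the eigenvalues (all nonnegative, since $P=A^{*}A$ for $A$ random-to-top) carry no such symmetry.

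Second --- and this is the real gap --- your plan for the $-\tfrac14 n\log\log n$ shift is too vague to succeed. The paper does not organize by hooks/two-row versus bulk; it groups by $l=n-\lambda_1$ and, within each $l$, by $k=|\mu|-l$. The eigenvalue bound $\eig(\lambda/\mu)\le 1-\tfrac{l}{n}-\tfrac{k^2+kl}{n^2}$ together with the multiplicity bound $\sum_{|\mu|=l+k}d^\mu\le\binom{l+k}{l-1}d_{\lambda/\lambda_1}$ (proved via the Reiner--Saliola--Welker jeu-de-taquin bijection) reduces the inner sum over $k$ to essentially $\sum_k (k+l/2)^{l-1}e^{-2t(k+l/2)^2/n^2}$, which after completing the square is controlled by a Gaussian integral of size $\Gamma(l/2)\,(n^2/2t)^{l/2}$. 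It is precisely this factor $(n^2/2t)^{l/2}\approx (n/\log n)^{l/2}$ that yields the extra $\tfrac{l}{2}\log\log n$ saving at each level $l$ and hence the $-\tfrac14 n\log\log n$ in $t$. Your ``eigenvalue expansion to order $1/n^2$ plus near-exact counts at each rank'' does not capture this mechanism; without the Gaussian-integral step (or an equivalent sharp estimate on the $k$-sum), the argument will stall at $\tfrac34 n\log n + O(n)$.
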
 
Theorem \ref{rtr}, in combination with Subag's lower bound \cite{Lower}, proves the existence of cutoff for the random to random card shuffle. Theorem \ref{rtr} is proven by bounding the eigenvalues of the random to random card shuffle. There has been much interest in the cutoff phenomenon, which is believed to be a common behavior of Markov chains. A heuristic given in \cite{DiaconisSurvey} suggests reversible card shuffles such as random to random should mix with cutoff; however, only a small number of Markov chains have been shown to mix with cutoff. 

We also consider the following generalization. Let $\nu=[\nu_1,\ldots, \nu_m]$ be a partition of $n$. Then we can perform the random to random card shuffle on a deck which has $\nu_i$ cards of type $i$, where $i=1,\ldots ,m$. We then say that the deck has evaluation $\nu$. 
  \begin{theorem}\label{word}
For the random to random shuffle on a deck with evaluation $\nu=[\nu_1,\ldots, \nu_m]$ with $\nu_1 \neq n$, we have that 
\begin{enumerate}
\item For $t=  \frac{3}{4} n\log n - \frac{1}{4}n\log\log{n} +cn$, then 
\begin{equation*}
||P_{x}^{*t}-\pi ||_{2}   \leq 2e^{-c},
\end{equation*}
for every $x\in S_n$ and every $c\geq 2$.
\item For $t= \frac{n}{4} \log m + \frac{n}{8} \log n -cn$, then 
\begin{equation*}
\left|\left| \frac{P_{x}^{*t}}{\pi} -1 \right|\right|_2  \geq \frac{1}{2}e^{c},
\end{equation*}
where $x\in S_n$, $c>0$ and the $l^2$ norm is with respect to the stationary distribution.
\end{enumerate}
\end{theorem}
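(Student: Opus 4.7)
The plan is to apply the $\ell^2$ method to the walk on the tabloid space $S_n/S_\nu$. The one-step distribution of random-to-random is a class function on $S_n$, so it is in particular $S_\nu$-biinvariant and induces a self-adjoint walk on $S_n/S_\nu$. Since $\operatorname{Ind}_{S_\nu}^{S_n}\mathbf{1} = \bigoplus_{\lambda \trianglerighteq \nu} K_{\lambda\nu}\, S^\lambda$, where $K_{\lambda\nu}$ is the Kostka number, the induced walk has spectrum $\{\eig_\lambda : \lambda \trianglerighteq \nu\}$ (the $\eig_\lambda$ being the Dieker--Saliola eigenvalues on $S_n$), and the Plancherel identity gives
$$\left\|\frac{P^{*t}_{x}}{\pi}-1\right\|_2^{\,2} \;=\; \sum_{\substack{\lambda\trianglerighteq\nu \\ \lambda\neq(n)}} K_{\lambda\nu}\, d_\lambda\, \bigl(\eig_\lambda\bigr)^{2t}.$$

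For part (1) I would use the elementary bound $K_{\lambda\nu} \leq K_{\lambda,(1^n)} = d_\lambda$ to dominate the right-hand side by $\sum_{\lambda\neq(n)} d_\lambda^{2}\,(\eig_\lambda)^{2t}$. This latter sum is precisely the quantity that the proof of Theorem \ref{rtr} bounds by $(2e^{-c})^2$ at the threshold $t = \tfrac34 n\log n - \tfrac14 n\log\log n + cn$, so part (1) is inherited from Theorem \ref{rtr} with no additional eigenvalue work; no Kostka-specific estimate is required beyond the trivial $K_{\lambda\nu}\leq d_\lambda$.

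For part (2) I would retain a single well-chosen summand $\mu$ and show $K_{\mu\nu}\, d_\mu\, (\eig_\mu)^{2t} \geq \tfrac14 e^{2c}$ at the stated time. The $\log m$ term in the threshold indicates that $\mu$ should be chosen so that $K_{\mu\nu}$ grows polynomially in $m$, while $d_\mu$ grows polynomially in $n$; a natural family to try is the hook shapes $\mu = (n-k, 1^{k})$, for which the Kostka number $K_{\mu\nu} = \binom{m-1}{k}$ (up to mild adjustments depending on $\nu$), the dimension $d_\mu = \binom{n-1}{k}$, and the eigenvalue $\eig_\mu$ is given in closed form by Dieker--Saliola. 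Matching the constants $\tfrac14$ and $\tfrac18$ in the threshold against $1-\eig_\mu$ and the leading exponents in $m$ and $n$ should single out the optimal $k$ (or possibly a two-row shape $(n-k,k)$), after which the one-term lower bound reduces to a direct asymptotic calculation.

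The main obstacle is part (2): selecting the correct $\mu$ and verifying that the single-summand contribution hits precisely the threshold constants $\tfrac14\log m$ and $\tfrac18\log n$. This requires a careful interplay between the Kostka combinatorics, the hook-length dimension formula, and the explicit Dieker--Saliola eigenvalue expression, with the hypothesis $\nu_1 \neq n$ needed to ensure that the chosen $\mu\neq(n)$ actually satisfies $\mu\trianglerighteq\nu$ and that $K_{\mu\nu}\geq 1$. Part (1), by contrast, is essentially a corollary of Theorem \ref{rtr} via the Kostka-dimension inequality.
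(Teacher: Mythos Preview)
Your Plancherel identity rests on a misreading of the Dieker--Saliola spectrum. The eigenvalues of random-to-random are not indexed by single partitions $\lambda$ with a value ``$\eig_\lambda$''; they are indexed by \emph{pairs} $(\lambda,\mu)$ with $\lambda/\mu$ a horizontal strip, and the eigenvalue is $\eig(\lambda/\mu)$ as in \eqref{eigen}. (Relatedly, the one-step distribution is \emph{not} a class function on $S_n$---adjacent transpositions occur in one step but non-adjacent ones cannot---so the shuffle does not act as a scalar on each $S^\lambda$.) On the tabloid space the correct identity is
\[
\left\|\frac{P^{*t}_x}{\pi}-1\right\|_2^2 \;=\; \sum_{\substack{\lambda\trianglerighteq\nu,\ \lambda\neq(n)\\ \lambda/\mu \text{ hor.\ strip}}} K_{\lambda\nu}\, d^{\mu}\, \bigl(\eig(\lambda/\mu)\bigr)^{2t},
\]
with $d^{\mu}$ the number of desarrangement tableaux of shape $\mu$. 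Once this is corrected, your strategy for part~(1) is exactly the paper's: the bound $K_{\lambda\nu}\leq d_\lambda$ dominates the sum by the one appearing in the proof of Theorem~\ref{rtr}, and nothing further is needed.

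For part~(2), however, the single-summand idea cannot reach the stated threshold. With the correct indexing, the candidate $\lambda=[n-1,1]$ has $K_{[n-1,1],\nu}=m-1$ and $d^{[k,1]}=1$ for every $k$, so any one term contributes only $(m-1)\bigl(1-\tfrac{n+k^2+k}{n^2}\bigr)^{2t}$, which carries no growth in $n$ and cannot account for the $\tfrac18 n\log n$ piece of the threshold. The paper instead fixes $\lambda=[n-1,1]$ and \emph{sums} over $\mu=[k,1]$ for $1\leq k\leq \sqrt{n}-1$; each of these $\sim\sqrt{n}$ eigenvalues is at least $1-2/n$, and the aggregate
\[
\sum_{k\leq\sqrt{n}-1}(m-1)\Bigl(1-\tfrac{n+k^2+k}{n^2}\Bigr)^{2t}\;\geq\;(\sqrt{n}-1)(m-1)\Bigl(1-\tfrac{2}{n}\Bigr)^{2t}
\]
is what produces the $\sqrt{n}$ factor behind the $\tfrac18 n\log n$. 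Your hook-shape proposal conflates the outer partition $\lambda$ with the inner partition $\mu$, and even after disentangling them, choosing a single $(\lambda,\mu)$ with $\lambda$ a larger hook would push $1-\eig(\lambda/\mu)$ up to order $l/n$ rather than $1/n$, so the gain in $K_{\lambda\nu}d^\mu$ is more than cancelled by the faster decay; the accumulation over $\mu$ for $\lambda=[n-1,1]$ is essential.
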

Since $m \leq n$, Theorem \ref{word} says that order $n \log n$ steps are necessary and sufficient for the $l^2$ distance to be small. Moreover, Theorem \ref{word} and Theorem 1.3 of Chen and Saloff-Coste \cite{CS} give that the random to random shuffle on a deck with evaluation $\nu$ will exhibit cutoff in $l^2$ distance at $Cn\log{n}$ steps for some constant $C$ that depends on $\nu$. 

The bounds of Theorem \ref{word} are not optimal for the total variation distance in the case where $m$ is a constant. For example, if  $\nu=[n-1,1]$, then we have $n-1$ cards that have label $a$ and one card labeled $b$. To get a random configuration of such a deck, we only have to wait until we touch card $b$. More precisely, if $T$ is the first time that we touch card $b$, then $T$ is a coupling time and a strong stationary time. This argument proves that the mixing time for the total variation distance and the separation distance is at most of order $n$, while Theorem \ref{word} says that $l^2$ mixing time is at least $\frac{n}{8} \log n$. 

Although proving Theorems \ref{rtr} and \ref{word} falls into the tradition of the pioneering work of spectral analysis of random walks on groups in the work of Diaconis and Shahshahani \cite{DiaSha} in bounding the mixing time of the random transposition card shuffle, it differs significantly in some respects. The analysis of both shuffles use the representation theory of the symmetric group to find a tight upper bound. The transposition walk is a conjugacy class walk, in which all elements of a conjugacy class have the same probability of being a generator of the random walk. Because of this, the walk can be diagonalized using the characters of the irreducible representations, with each irreducible representation contributing a single eigenvalue. 

The random to random shuffle is not a conjugacy class walk. For example, one conjugacy class of the symmetric group is all of the transpositions. One step of the random to random shuffle can change the order of the deck by an adjacent transposition, but never any other transposition. Since it is not a conjugacy class walk, the shuffle does not act as a constant operator on each irreducible representation. This is reflected in the eigenvalues being indexed by a pair of partitions $(\lambda, \mu)$ where $\lambda$ is indexing the irreducible representation and $\mu$ a further level of decomposition. This greatly increases the difficulty in diagonalizing and analyzing such walks.

As mentioned earlier, we make use of Theorem $5$ of \cite{DSaliola}, where Dieker and Saliola prove a formula for the eigenvalues of the random to random shuffle. The main challenge in proving Theorem \ref{rtr} is to understand how these eigenvalues and their multiplicities behave and appropriately bound them. In most card shuffling examples, such as random transpositions, the largest non-trivial eigenvalue and its multiplicity determine the mixing time. This is not the case for the random to random shuffle, as the spectrum corresponding to each irreducible representation splits into a cascade of eigenvalues, that start tightly bunched before trailing off. This feature of the spectrum, as first noticed by Uyemura-Reyes in the partial diagonalization, gives rise to the odd coefficient of $\frac{3}{4}$.  Uyemura-Reyes explicitly found $n-1$ of the non-trivial eigenvalues of the random to random shuffle each with multiplicity $n-1$, and in a computation similar to the lower bound from Theorem \ref{word}, achieved an $l^2$ lower bound and a suggestion of where an $l^2$ upper bound occurs. Unusually, for this walk, the largest non-trivial eigenvalue and its multiplicity do not determine the mixing time, as their contribution is small after $t = \frac{1}{2}n\log{n} +cn$ steps. Since the gaps between the next $\sqrt{n}$ largest eigenvalues are of smaller order than the spectral gap, their combined multiplicity of order $n^{3/2}$ takes about $\frac{3}{4}n\log{n}$ steps to control. Taking full advantage of the smaller order decay between these eigenvalues saves the additional $\frac{1}{4}n\log\log{n}$ factor. See Proposition \ref{largesteig} for full details.   

This paper is organized as following: Section \ref{prel} explains how the eigenvalues of the transition matrix $P$ can be used to bound the $l^2$ distance. In Sections \ref{eigenvalues} and \ref{boundmult}  we quickly review the work of Dieker and Saliola \cite{DSaliola} and we provide bounds for the eigenvalues and their multiplicities. In Section \ref{prf} we present the proof of Theorem \ref{rtr}. In Section \ref{generalization}, we briefly examine the case where the $n$ cards of the deck are not necessarily distinct and prove Theorem \ref{word}. On section \ref{lastsection}, we present an application of our result for the random to random card shuffle: Theorem \ref{rtr} is used to prove that the time inhomogeneous card shuffle known as card-cyclic to random mixes in at most $\frac{3}{2} n \log n $ steps.

\section{The eigenvalues and their multiplicities}\label{prel}

The random to random card shuffle is a symmetric, transitive, reversible Markov chain, so its transition matrix diagonalizes with an orthonormal eigenbasis. We now recall Lemma $12.16$ of \cite{LPW} which will allow us to use a $l^2$ bound.

\begin{lemma}\label{l2}
Let $P$ be a reversible, transitive transition matrix of a random walk on a finite state space $\Omega$ with eigenvalues $1 =\lambda_1 >\lambda_2 \geq \ldots \geq \lambda_{|\Omega|} \geq -1$, then:
\[4||P^t(x, \cdot) - \pi||^2_{\TV} \leq \left|\left| \frac{P^t(x,\cdot)}{\pi(\cdot)} -1 \right|\right|^2_2 = \sum_{j=2}^{|\Omega|} \lambda_j^{2t}\]
for every starting point $x \in \Omega$.
\end{lemma}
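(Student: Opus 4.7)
The statement packages a standard $L^2$-to-TV inequality with the spectral identity for a reversible transitive chain, and I would tackle the two pieces separately.

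For the inequality $4\|P^t(x,\cdot)-\pi\|_{\TV}^2 \le \|P^t(x,\cdot)/\pi(\cdot)-1\|_2^2$, I would rewrite the TV distance as $\tfrac12 \sum_y \bigl|P^t(x,y)/\pi(y)-1\bigr|\,\pi(y)$, view this as the $L^1(\pi)$ norm of the function $g(y) := P^t(x,y)/\pi(y)-1$ against the probability measure $\pi$, and apply Cauchy--Schwarz (equivalently, monotonicity of $L^p(\pi)$ norms in $p$) to bound $\|g\|_{L^1(\pi)} \le \|g\|_{L^2(\pi)}$. This is the cheap half; no reversibility or transitivity is used.

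For the identity, I would use reversibility to diagonalize $P$ on $L^2(\pi)$ in an orthonormal eigenbasis $f_1,\ldots,f_{|\Omega|}$ with $f_1 \equiv 1$ corresponding to $\lambda_1 = 1$. Expanding the density $\delta_x/\pi(x) = \sum_j f_j(x)\,f_j$ in this basis and applying $P^t$ entrywise gives the kernel formula $P^t(x,y)/\pi(y) = \sum_j \lambda_j^t f_j(x) f_j(y)$. Subtracting the $j=1$ term and applying Parseval in $L^2(\pi)$ then yields
\[
  \left\|\frac{P^t(x,\cdot)}{\pi(\cdot)} - 1\right\|_2^2 \;=\; \sum_{j=2}^{|\Omega|} \lambda_j^{2t}\, f_j(x)^2,
\]
which is the desired expression apart from the unwanted factors $f_j(x)^2$.

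The delicate step, and the one I expect to be the main obstacle in writing the proof cleanly, is removing the dependence on $x$, which is exactly where transitivity enters. Because the chain is transitive there is a group $G$ acting transitively on $\Omega$ and commuting with $P$; this forces $\pi$ to be uniform and makes $G$ act unitarily on every eigenspace of $P$. Consequently, for each eigenvalue $\lambda$, the diagonal sum $\sum_{j:\lambda_j=\lambda} f_j(x)^2$ is $G$-invariant in $x$ and hence constant on $\Omega$; averaging $f_j(x)^2$ over $x$ and using orthonormality together with $\pi \equiv 1/|\Omega|$ pins each such constant to be the multiplicity of $\lambda$. Substituting this constant value of $f_j(x)^2$ (namely $1$ after the averaging) back into the previous display collapses the right-hand side to $\sum_{j \ge 2} \lambda_j^{2t}$, closing the identity.
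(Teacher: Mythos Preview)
The paper does not actually prove this lemma; it simply quotes it as Lemma~12.16 of \cite{LPW}. So there is no ``paper's proof'' to compare against, and your task reduces to checking that your argument is sound.

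Your proof is correct and is the standard one. The Cauchy--Schwarz step for the TV-to-$L^2$ inequality is fine, and the spectral expansion $\|P^t(x,\cdot)/\pi - 1\|_2^2 = \sum_{j\ge 2}\lambda_j^{2t} f_j(x)^2$ is exactly what reversibility gives. Your use of transitivity is also the right idea: the $G$-action permutes any orthonormal basis of each eigenspace by a unitary, so the diagonal of the projector, $\sum_{j:\lambda_j=\lambda} f_j(x)^2$, is $G$-invariant and hence constant in $x$; averaging over $x$ with $\pi\equiv 1/|\Omega|$ shows that constant is the multiplicity of $\lambda$, and the identity follows.

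One wording quibble: your parenthetical ``(namely $1$ after the averaging)'' reads as if each individual $f_j(x)^2$ equals $1$, which is false in general --- only the eigenspace sum is constant. Since your previous sentence already states the correct fact, just drop the parenthetical or rephrase it as ``so that $\sum_{j:\lambda_j=\lambda}\lambda_j^{2t}f_j(x)^2 = \lambda^{2t}\cdot\mathrm{mult}(\lambda)$.''
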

\subsection{The eigenvalues}\label{eigenvalues}
In this section, we will present the formula for the eigenvalues that Dieker and Saliola \cite{DSaliola} introduced. We will present some basic tools from the representation theory of the symmetric group, and we will prove a few bounds for the eigenvalues and their multiplicities that will be used to prove Theorem \ref{rtr}. 

A partition $\lambda= [\lambda_1,\ldots,\lambda_r]$ of $n$ is a list of integers satisfying $\lambda_1 \geq \cdots \geq \lambda_r >0$ and $\lambda_1 + \cdots \lambda_r = n$. We denote $|\lambda| = n$. The Young diagram of $\lambda$ consists of $r$ left justified rows of cells with $\lambda_i$ cells in row $i$. For a partition $\mu$ of $m$, we say $\mu$ is contained in $\lambda$ if $\mu_i \leq \lambda_i$ for each $i$, and the pair of partitions is called a skew shape. The skew Young diagram $\lambda/\mu$ consists of the cells of $\lambda$ not in $\mu$. Let $\lambda'$ and $\mu'$ denote the transpose partitions of $\lambda$ and $\mu$ respectively.

The skew Young diagram $\lambda/\mu$ is a horizontal strip if for each $i$, $\lambda_i' -1 \leq \mu_i'$, meaning the skew diagram has at most one box per column.
\begin{example}
If $n=5$, for
$\lambda=
\young(~~~,~~)
$
and $\mu=
\young(~~,~)
$
we have that $\lambda/ \mu= \tikztableausmall{{    X,    X,\null},{   X,\null}}$

is a skew diagram and a horizontal strip.
\end{example}
The eigenvalues of the random to random card shuffle are indexed by such pairs $(\lambda, \mu)$ where  $\lambda/\mu$ is a horizontal strip. For any Young diagram, let $(i,j)$ denote the cell that is in the $i^{th}$ row and in the $j^{th}$ column. Define the diagonal index of $\lambda$:
\begin{equation}\label{diag}
diag( \lambda )= \sum_{(i,j)\in \lambda} (j-i).
\end{equation}

\begin{example}
For
$\lambda=
\young(~~~,~~)
$
we have that $\Diag(\lambda)= 0+1+2-1+0=2$.
\end{example}
Dieker and Saliola \cite{DSaliola} found that the eigenvalue that corresponds to the pair $(\lambda, \mu)$ is:
\begin{equation}\label{eigen}
\eig( \lambda/ \mu) = \frac{1}{n^2} \left( {n+1 \choose 2} - {|\mu| + 1 \choose 2} + \Diag(\lambda) - \Diag(\mu) \right).
\end{equation}

To discuss what the multiplicity of each eigenvalue is, we need to talk about {\it desarrangement} tableaux. Firstly, a {\it tableau} of shape $\lambda$ is a filling of a Young diagram so that each cell is assigned a value. A {\it standard tableau} of shape $\lambda$ with $|\lambda| = n$ is one in which the values are from $\{1,\ldots,n \}$ and the entries of each row and column are in strictly increasing order. This necessitates that each entry is used once.
An entry $i$ of a tableau is an {\it ascent} if $i=n$ or if $i<n$ and $i+1$ occurs weakly to the north and east of $i$.  A standard tableau is a {\it desarrangement} tableau if its smallest ascent is even. This is equivalent to either there being no $(1,2)$ cell of $\lambda$ with $|\lambda|$ even or that the value of the $(1,2)$ cell is odd.
\begin{example}\label{desEx}
The tableau $
\young(13,2)
$
is a {\it desarrangement} tableau while $
\young(12,3)
$ is not. The tableaux of partitions with one row $\young(1)$ and $\young(12)$ are not {\it desarrangement} tableaux while $\young(1,2)$ is. Also, the tableau  $
\young(1,2,3)
$ is not a {\it desarrangement} tableau.
\end{example}

\begin{theorem}[Dieker-Saliola, \cite{DSaliola}]\label{DieSal}
The eigenvalues of the random to random card shuffle are indexed by $(\lambda, \mu)$ pairs where $\lambda/\mu$ is a horizontal strip. The corresponding eigenvalue is $\eig(\lambda/\mu)$  and occurs with multiplicity $d_\lambda d^\mu$ where $d_\lambda$ is the number of standard Young tableaux of shape $\lambda$ and $d^\mu$ is the number of {\it desarrangement} tableaux of shape $\mu$. 
\end{theorem}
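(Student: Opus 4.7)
The plan is to diagonalize right-multiplication by $\rho = \frac{1}{n^2}\sum_{i,j} c_{ij} \in \mathbb{C}S_n$, where $c_{ij}$ is the cycle moving position $i$ to position $j$. First, decompose $\mathbb{C}S_n \cong \bigoplus_\lambda V_\lambda\otimes V_\lambda^\ast$ as an $S_n\times S_n$-bimodule; since $\rho$ acts only on the right factor, any eigenvalue of $\rho$ on $V_\lambda^\ast$ inherits multiplicity $d_\lambda = \dim V_\lambda$ inside $\mathbb{C}S_n$, which explains the factor $d_\lambda$.

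To diagonalize $\rho$ on each $V_\lambda^\ast$, I would factor $\rho = UL$, where $L$ averages ``remove a card at a uniform random position'' and $U$ averages ``re-insert it at a uniform random position''. Both $L$ and $U$ are intertwiners for the restriction/induction pair of the inclusion $S_{n-1}\subset S_n$, so $UL$ factors through $\operatorname{Res}^{S_n}_{S_{n-1}} V_\lambda^\ast = \bigoplus_\nu V_\nu^\ast$, summed over single-corner removals $\nu$. To access horizontal strips of arbitrary size (not just single boxes), I would then iterate or refine this factorization and work in a Gelfand--Tsetlin basis of $V_\lambda^\ast$ indexed by standard Young tableaux $T$, in which the Jucys--Murphy elements $X_k = \sum_{i<k}(i,k)$ act diagonally by the contents $j-i$ of the cells placed at each step.

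Given this setup, each $T$ should determine a canonical sub-shape $\mu\subseteq\lambda$, with $\lambda/\mu$ automatically a horizontal strip, extracted from the ascent/descent pattern of $T$; the desarrangement condition on fillings of $\mu$ is then precisely the criterion characterizing when $T$ indexes a genuine joint eigenvector of $\rho$ with ``prefix'' $\mu$. A direct computation of the action of the $c_{ij}$ on such a $T$ should split the eigenvalue into a positional piece $\binom{n+1}{2}-\binom{|\mu|+1}{2}$, counting the $(i,j)$-pairs whose action on $T$ preserves the frozen-prefix eigenspace, and a content piece $\operatorname{diag}(\lambda)-\operatorname{diag}(\mu)$ arising from the Jucys--Murphy contribution on $\lambda/\mu$, matching (\ref{eigen}). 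The main obstacle is establishing the combinatorial identity $d_\lambda = \sum_{\mu\,:\,\lambda/\mu\ \text{horiz. strip}} d^\mu$ via a bijection between standard tableaux of $\lambda$ and pairs consisting of a horizontal-strip sub-shape together with a desarrangement filling, since this is what ensures the proposed eigenvectors form a complete basis of $V_\lambda^\ast$ and therefore pin down $d^\mu$ as the correct within-$\lambda$ multiplicity.
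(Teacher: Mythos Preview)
The paper does not prove Theorem~\ref{DieSal}; it is quoted from Dieker and Saliola \cite{DSaliola} and used as a black box. There is therefore no proof in the paper to compare your proposal against.

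That said, your sketch is a plausible high-level outline of the Dieker--Saliola strategy (bimodule decomposition to extract the $d_\lambda$ factor, factoring random-to-random as $A^\ast A$ with $A$ the random-to-top operator, and passing to a Gelfand--Tsetlin basis where Jucys--Murphy elements act by contents), but it is far from a proof. The two central assertions---that the desarrangement condition on $\mu$ is ``precisely the criterion'' picking out genuine eigenvectors, and that the eigenvalue ``should split'' into the positional piece $\binom{n+1}{2}-\binom{|\mu|+1}{2}$ plus the content piece $\diag(\lambda)-\diag(\mu)$---are asserted, not argued. In the actual Dieker--Saliola paper these are the substantial steps, carried out via an explicit recursive construction of eigenvectors for a whole family of related operators; the emergence of desarrangement tableaux is not at all immediate from the Gelfand--Tsetlin picture and requires real work. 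Finally, the bijection you flag as ``the main obstacle'' is already known: it is Proposition~9.4 of Reiner--Saliola--Welker \cite{RSW}, restated here as Proposition~\ref{rsw14}. So while that identity is needed to check the multiplicities sum correctly, it is not where the difficulty of Theorem~\ref{DieSal} lies.
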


Since there are $n!$ eigenvalues, we can only feasibly control them through upper bounds based on a few features of $\lambda$ and $\mu$. Despite the added complication of two indexing parameters, there is still a partial monotonicity relation on the eigenvalues for fixed $\lambda$ as $\mu$ changes that is simple to describe.

The following lemma says that all the eigenvalues of the random to random card shuffle are non-negative.
\begin{lemma}\label{positivity}
For all horizontal strips $\lambda/\mu$,
\begin{equation*}
\eig(\lambda/\mu) \geq 0.
\end{equation*}
\end{lemma}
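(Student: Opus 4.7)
The plan is to rewrite the formula so that non-negativity of $\eig(\lambda/\mu)$ becomes a monotonicity statement for a single-partition functional. Setting
\[
F(\nu) := \binom{|\nu|+1}{2} + \Diag(\nu),
\]
one immediately has $n^2\,\eig(\lambda/\mu) = F(\lambda) - F(\mu)$, so it suffices to show that $F(\lambda) \geq F(\mu)$ whenever $\lambda/\mu$ is a horizontal strip.

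The key calculation is a single-cell increment. If $\nu$ is a partition and $c = (i^*, j^*)$ is an addable cell (so $i^* = \nu_{j^*}' + 1$), then a direct computation gives
\[
F(\nu \cup \{c\}) - F(\nu) = (|\nu|+1) + (j^* - i^*) = j^* + |\nu| - \nu_{j^*}'.
\]
Since $j^* \geq 1$ and the column length $\nu_{j^*}'$ is at most $|\nu|$, this increment is always at least $1$, and in particular strictly positive.

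To conclude, I will decompose the horizontal strip as a sequence of single-cell additions $\mu = \nu^{(0)} \subset \nu^{(1)} \subset \cdots \subset \nu^{(n-|\mu|)} = \lambda$ in which every intermediate $\nu^{(\ell)}$ is a valid partition. Ordering the added cells left-to-right by their column index $j_1 < j_2 < \cdots$ achieves this: comparing the column lengths of $\nu^{(\ell)}$ at $j$ and $j+1$, one splits into four cases according to whether $j, j+1$ already belong to $\{j_1,\ldots,j_\ell\}$ and uses either the horizontal-strip inequality $\mu_j' \geq \mu_{j+1}'$ together with $\mu_j' \geq \lambda_{j+1}'$ (applied when $j\notin J$ but $j+1\in J$), or simply the fact that $\lambda$ itself is a partition. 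Telescoping the positive single-cell increments then yields $F(\lambda) - F(\mu) \geq 0$, with equality exactly when $\lambda = \mu$.

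The single-cell computation and the telescoping are mechanical once the functional $F$ is isolated; the only place where genuine care is needed is verifying that the left-to-right column ordering always passes through partitions, which is the step that uses the horizontal-strip hypothesis. I expect this case check to be the main, though still routine, obstacle.
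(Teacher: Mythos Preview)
Your argument is correct. The functional $F(\nu)=\binom{|\nu|+1}{2}+\Diag(\nu)$ indeed satisfies $n^2\eig(\lambda/\mu)=F(\lambda)-F(\mu)$, the single-cell increment computation is right, and the left-to-right chain through partitions works (in your Case~4, the key point is that $j\notin J$ forces $\mu_j'=\lambda_j'$, whence $\mu_j'\geq\lambda_{j+1}'$ via $\lambda$ being a partition). In fact your argument proves more than you claim: since every addable-cell increment is at least $1$, and any containment $\mu\subseteq\lambda$ admits a saturated chain in Young's lattice, you get $F(\lambda)>F(\mu)$ for \emph{any} proper containment, not just horizontal strips. The horizontal-strip hypothesis is only needed to make $\eig(\lambda/\mu)$ an actual eigenvalue of the shuffle, not for the inequality itself.

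The paper takes a completely different and much shorter route: it observes that the random-to-random transition matrix $P$ equals $A^*A$ where $A$ is the transition matrix of random-to-top, so $P$ is positive semidefinite and all its eigenvalues are nonnegative. This is a one-line structural argument that bypasses the combinatorial formula entirely. Your approach, by contrast, is a direct verification from the Dieker--Saliola formula; it is longer but self-contained (it does not require knowing the $P=A^*A$ factorisation), and it yields the extra information that $\eig(\lambda/\mu)>0$ strictly unless $\lambda=\mu$, together with the general monotonicity of $F$ on Young's lattice.
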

\begin{proof}
This quickly follows from the random to random shuffle being the symmetrization of the random to top shuffle. Denoting the transition matrix of the random to top shuffle as $A$, $P = A^{*}A$ and therefore is positive semi-definite.
\end{proof}

\begin{proposition}\label{monotonicity}
Given Young diagrams $\lambda, \mu, \tilde{\mu}$, where $\lambda / \mu$, $\lambda / \tilde{\mu}$ are horizontal strips and $\tilde{\mu} \subseteq \mu$ with $\subseteq$ referring to containment, we have that
\[ \eig(\lambda/\mu) \leq \eig(\lambda/\tilde{\mu}).\]

\end{proposition}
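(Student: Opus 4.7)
The plan is to isolate the $\mu$-dependent part of the eigenvalue formula \eqref{eigen} and reduce the claim to a monotonicity statement for Young diagrams. Writing
\[
\eig(\lambda/\tilde\mu) - \eig(\lambda/\mu) = \frac{1}{n^2}\bigl( f(\mu) - f(\tilde\mu) \bigr), \qquad f(\nu) := \binom{|\nu|+1}{2} + \Diag(\nu),
\]
the proposition reduces to showing that $f(\mu) \ge f(\tilde\mu)$ whenever $\tilde\mu \subseteq \mu$ as Young diagrams. Note that the horizontal-strip hypotheses on $\lambda/\mu$ and $\lambda/\tilde\mu$ play no further role beyond guaranteeing that both sides are genuine eigenvalues.

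The core step is the single-cell case: if $\mu$ arises from $\tilde\mu$ by adding one cell $(i,j)$, then
\[
f(\mu) - f(\tilde\mu) = (|\tilde\mu| + 1) + (j - i),
\]
so the task is to verify $|\tilde\mu| \ge i - j - 1$. Since $(i,j) \in \mu$ and $\mu$ is a Young diagram, the entire rectangle of cells $(i',j')$ with $i' \le i$ and $j' \le j$ sits inside $\mu$; discarding $(i,j)$ itself leaves $ij - 1$ such cells in $\tilde\mu$, so $|\tilde\mu| \ge ij - 1 \ge i - j - 1$ for $j \ge 1$. This gives the one-cell monotonicity.

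To pass to the general case, I would construct any chain $\tilde\mu = \nu^{(0)} \subsetneq \nu^{(1)} \subsetneq \cdots \subsetneq \nu^{(k)} = \mu$ of Young diagrams with $|\nu^{(\ell+1)}| = |\nu^{(\ell)}| + 1$. Such a chain always exists because at each step one can add any cell $(i,j) \in \mu \setminus \nu^{(\ell)}$ minimizing $i + j$: any cell strictly to the northwest of such a $(i,j)$ lies in $\mu$ and has smaller $i+j$, hence was already included in $\nu^{(\ell)}$, which makes $(i,j)$ addable. Applying the single-cell monotonicity along the chain yields $f(\mu) \ge f(\tilde\mu)$, and therefore $\eig(\lambda/\mu) \le \eig(\lambda/\tilde\mu)$. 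The only genuine computation is the bound $|\tilde\mu| \ge ij - 1$ for an addable cell, which is immediate from the Young diagram structure; the rest is algebraic bookkeeping around \eqref{eigen} and the construction of the chain.
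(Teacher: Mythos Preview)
Your proof is correct, but it takes a genuinely different route from the paper's.

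The paper first observes that if $\lambda/\tilde\mu$ is a horizontal strip then so is $\mu/\tilde\mu$, and then uses the telescoping identity
\[
n^2\eig(\lambda/\mu) = n^2\eig(\lambda/\tilde\mu) - |\mu|^2\,\eig(\mu/\tilde\mu),
\]
together with Lemma~\ref{positivity} (all eigenvalues are nonnegative, since the transition matrix is $A^*A$ for $A$ the random-to-top matrix) to conclude. So the paper's argument hinges on the positive semidefiniteness of the shuffle, and genuinely uses the horizontal-strip hypothesis to ensure $\eig(\mu/\tilde\mu)$ is an actual eigenvalue of a smaller random-to-random chain.

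Your argument is purely combinatorial: you isolate $f(\nu)=\binom{|\nu|+1}{2}+\Diag(\nu)$ and show it is monotone under containment of Young diagrams by a one-cell induction, using only the rectangle bound $|\tilde\mu|\ge ij-1$ for an addable cell $(i,j)$. This is more elementary---it does not invoke the spectral interpretation or positive semidefiniteness---and in fact proves the slightly stronger statement that the inequality holds for \emph{any} containment $\tilde\mu\subseteq\mu$, with the horizontal-strip hypotheses irrelevant except to make both sides eigenvalues. The paper's proof is shorter and more conceptual once Lemma~\ref{positivity} is available; yours is self-contained and would survive even without knowing the eigenvalues are nonnegative.
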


\begin{proof}
Under the given assumptions it is easy to see that $\lambda/\tilde{\mu}=\lambda/\mu \oplus \mu/\tilde{\mu} $ where $\oplus$ indicates concatenation of the cells of the two skew diagrams. If $\lambda/\tilde{\mu}$ has at most one cell per column then this has to be true for $\lambda/\mu$ and $\mu/\tilde{\mu}$. Therefore, $\mu/\tilde{\mu}$ is a horizontal strip. Let $|\lambda| = n$ and $|\mu| = m$. Then, we can write
\[ n^2\eig(\lambda/\mu) = n^2\eig(\lambda/\tilde{\mu}) - m^2\eig(\mu/\tilde{\mu}),\]
which is a clear outcome of \eqref{eigen}.
Lemma \ref{positivity} says that $ \eig(\mu/ \tilde{\mu} )\geq 0,$ which finishes the proof.
\end{proof}

The following proposition studies the sizes of the eigenvalues in terms of the length of the first row of $\lambda$ and the size of $\mu$. It provides a single bound that will be sufficient to get the $\frac{3}{4}n \log n - \frac{1}{4}\log\log n + cn$ bound for the mixing time: 
\begin{proposition}\label{ieb}
For any $\lambda$ with $n- \lambda_1 = l$ and $\mu$ a partition of size $|\mu| = l + k$ for which $\lambda/\mu$ is a horizontal strip,
\[ \eig({\lambda/\mu}) \leq 1 - \frac{l}{n} - \frac{k^2 + kl}{n^2} .\]
\end{proposition}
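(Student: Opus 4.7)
The plan is to work directly from the eigenvalue formula \eqref{eigen} and reduce the problem to bounding the diagonal index of the skew shape $\lambda/\mu$, i.e., the quantity $\Diag(\lambda) - \Diag(\mu) = \sum_{(r,c) \in \lambda/\mu}(c - r)$. Writing $m := |\mu| = l+k$, formula \eqref{eigen} gives
$$n^2 \eig(\lambda/\mu) = \binom{n+1}{2} - \binom{m+1}{2} + \Diag(\lambda) - \Diag(\mu),$$
so it suffices to bound $\Diag(\lambda) - \Diag(\mu)$ sharply in terms of $n$, $l$, and $k$.

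The key step is to exploit the fact that the horizontal strip hypothesis strongly restricts the cells of $\lambda/\mu$: the $|\lambda/\mu| = n - l - k$ cells lie in distinct columns, each column index $c$ satisfies $c \leq \lambda_1 = n - l$, and each row index $r$ satisfies $r \geq 1$. So the sum of column indices over cells of $\lambda/\mu$ is at most the sum of the top $n-l-k$ elements of $\{1,2,\ldots,n-l\}$, namely $\binom{n-l+1}{2} - \binom{k+1}{2}$, while the sum of row indices is at least $n-l-k$. This yields
$$\Diag(\lambda) - \Diag(\mu) \;\leq\; \binom{n-l+1}{2} - \binom{k+1}{2} - (n-l-k) \;=\; \binom{n-l}{2} - \binom{k}{2}.$$

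Substituting this bound into the expression for $n^2 \eig(\lambda/\mu)$ and using $m = l + k$ reduces the rest of the argument to a short binomial simplification, yielding $n^2 \eig(\lambda/\mu) \leq n^2 - nl - k^2 - kl$, which is equivalent to the stated inequality. I expect the main obstacle to be purely conceptual: one must recognise that the crudest possible use of the horizontal strip property---only the distinct-columns constraint, combined with the trivial lower bound $r \geq 1$---is already sharp enough. In particular, no appeal to Proposition \ref{monotonicity} nor to the finer structure of $\mu$ is needed.
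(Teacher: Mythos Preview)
Your proof is correct and follows essentially the same route as the paper's: both bound $\Diag(\lambda)-\Diag(\mu)$ by $\binom{n-l}{2}-\binom{k}{2}$ using that a horizontal strip has at most one cell per column, columns lie in $\{1,\dots,n-l\}$, and rows are $\geq 1$, and then carry out the identical binomial simplification. Your version is slightly more direct in that you bound the sum $\sum_{(r,c)\in\lambda/\mu}(c-r)$ immediately rather than first identifying the extremal pair $\lambda=[n-l,1^l]$, $\mu=[k,1^l]$ and handling the $k=0$ case separately, but the underlying idea is the same.
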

\begin{proof}

We have that
\[ n^2\eig(\lambda,\mu) = {n + 1 \choose 2} - {k+l + 1 \choose 2} + \diag(\lambda/\mu),\]
with $\diag(\lambda/\mu) = \sum_{(i,j) \in \lambda/\mu} (j-i)$ where box $(i,j)$ is in row $i$ and column $j$ of the shape. When $|\mu| < l$, $\lambda/\mu$ is never a horizontal strip and so does not index an eigenvalue. For each $\lambda$ there is a unique $\mu$ of size $l = n - \lambda_1$, given by $\mu_i' = \lambda_i'$, so that $\lambda/\mu$ forms a horizontal strip. As $\mu$ might not have desarrangement tableaux, it might not correspond to an eigenvalue. Then for this unique $\mu$, $\diag(\lambda) - \diag(\mu) = \sum_{j=1}^{\lambda_1} j -| \lambda_j'| = {n-l + 1 \choose 2} - n$. 
\begin{align*} \eig(\lambda/\mu) &= \frac{1}{n^2}\left( {n+1 \choose 2} - {l + 1 \choose 2} + {n-l + 1 \choose 2} - n \right) \\
& = 1 -\frac{ (n+1)l}{n^2}\\
& \leq 1 - \frac{l}{n}
\end{align*}

For $k \geq 1$ and $l$ fixed, over all horizontal strips $\lambda/\mu$ with $\lambda_1 = n-l$ and $|\mu| = l+k$ the eigenvalues are maximized when the boxes in the skew shape are as far to the right and up as possible. When $\lambda = [n-l, 1^l]$, $\mu = [k,1^l]$, $\lambda/\mu$ is a horizontal strip and the $n-l-k$ boxes of $\lambda/\mu$ are all in the first row, achieving this maximum, with
\[ \diag([n-1,1^l]) - \diag([k,1^l]) = (n-l-1) + \ldots + (k+1 -1) = {n -l \choose 2} - {k \choose 2}.\] 
Simplifying, 
\[\binom{n+1}{2} - \binom{k+l+1}{2} + \binom{n-l}{2} - \binom{k }{ 2} = n^2\left( 1 - \frac{l}{n} - \frac{k^2 + kl}{n^2} \right).\]

\end{proof}
\subsection{Bounding multiplicities}\label{boundmult}
In this section, we introduce bounds on the multiplicities of the eigenvalues. Theorem \ref{DieSal} says that  the eigenvalue $\eig(\lambda/\mu)$ occurs with multiplicity $d_\lambda d^\mu$ where $d_\lambda$ is the number of standard Young tableaux of shape $\lambda$ and $d^\mu$ is the number of desarrangement tableaux of shape $\mu$.
Proposition 9.4 of Reiner, Saliola, Welker \cite{RSW} gives a bijection between tableaux of shape $\lambda$ and the desarrangement tableaux of the shapes $\mu$, for which $\lambda/\mu$ is a horizontal strip. As consequence of this, $\sum_\mu d^\mu = d_\lambda$. The total multiplicity of eigenvalues corresponding to a $(\lambda,\cdot)$ pair is thus $d_\lambda^2$. 

To bound the $d_{\lambda},$ we will make use of the following proposition which is an outcome of Corollary $2$ of Diaconis and Shahshahani \cite{DiaSha}.
\begin{proposition}[Diaconis-Shahshahani]\label{mult}
We have $d_\lambda \leq {n \choose n-\lambda_1} d_{\lambda/\lambda_1}$ and
\[ \sum_{\lambda: n- \lambda_1 = l} d_\lambda^2 \leq {n \choose l}\frac{n!}{(n-l)!}.\]
\end{proposition}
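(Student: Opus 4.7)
The plan is to prove the first bound by a natural injective encoding of standard Young tableaux, and then obtain the second bound by squaring, summing, and invoking a classical representation-theoretic identity.

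First I would prove the first inequality. A standard Young tableau $T$ of shape $\lambda$ is uniquely specified by the pair $(S,\tilde T)$, where $S\subseteq\{1,\ldots,n\}$ is the set of size $l=n-\lambda_1$ of entries of $T$ that lie in rows $2,\ldots,r$, and $\tilde T$ is the standard Young tableau of shape $\tilde\lambda:=[\lambda_2,\ldots,\lambda_r]$ obtained by replacing each entry in $S$ by its rank within $S$. Once $(S,\tilde T)$ is chosen, the first row of $T$ is forced to contain $\{1,\ldots,n\}\setminus S$ in increasing order. The map $T\mapsto (S,\tilde T)$ is clearly injective, but typically not surjective: many pairs produce a filling of $\lambda$ that violates the column-strict condition between rows $1$ and $2$. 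This gives $d_\lambda\leq \binom{n}{l}\,d_{\tilde\lambda}$, which is the claim because the skew shape $\lambda/\lambda_1$ has exactly the cells of $\tilde\lambda$, so $d_{\lambda/\lambda_1}=d_{\tilde\lambda}$.

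For the second bound, I would square the first inequality and sum over all $\lambda\vdash n$ with $n-\lambda_1=l$. The map $\lambda\mapsto\tilde\lambda$ sends such $\lambda$ to partitions of $l$ (subject to $\tilde\lambda_1\leq n-l$, a constraint I would simply drop since enlarging the index set only increases the right-hand side). Thus
\[
\sum_{\lambda:\,n-\lambda_1=l} d_\lambda^2 \;\leq\; \binom{n}{l}^2\sum_{\tilde\lambda\vdash l} d_{\tilde\lambda}^2.
\]
I would then invoke the classical identity $\sum_{\tilde\lambda\vdash l} d_{\tilde\lambda}^2=l!$, which is either the statement that the sum of squared dimensions of the irreducible representations of $S_l$ equals $|S_l|$, or equivalently the Robinson--Schensted bijection between $S_l$ and pairs of standard Young tableaux of the same shape. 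The conclusion then follows from the identity $\binom{n}{l}^2 l! = \binom{n}{l}\frac{n!}{(n-l)!}$.

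No step here is really an obstacle; the argument is bookkeeping plus a well-known dimension identity. The only subtlety worth flagging is that the encoding $T\mapsto(S,\tilde T)$ is not a bijection in general, so the first inequality, and hence the second, is typically strict.
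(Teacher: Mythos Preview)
Your argument is correct. The paper itself does not give a proof of this proposition; it simply attributes the result to Corollary~2 of Diaconis and Shahshahani \cite{DiaSha} and later, in the proof of Theorem~\ref{rtr}, invokes the identity $\sum_{\lambda:\lambda_1=n-l} d_{\lambda/\lambda_1}^2 = l!$ via RSK, exactly as you do. Your injection $T\mapsto(S,\tilde T)$ for the first inequality and the square--sum--RSK computation for the second are the standard justifications and match what the paper implicitly relies on.
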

To bound the $d^{\mu},$ we will need one further concept from the combinatorics of tableaux. The {\it jeu de taquin} is a transformation of a skew standard Young tableau that gives a skew standard Young tableau of a different shape. Given a tableau of a skew shape $\lambda/\mu$ and an empty cell $c$ that can be added to the skew shape (meaning that the new tableau is still of skew shape), do moves as follows. The fact that $c$ can be added to the skew shape means that there are two cells of $\lambda/\mu$ adjacent to $c$. Move the entry into $c$ that maintains the property that entries are increasing along rows and columns. 

Let the interior of $\lambda / \mu$ consist of the boxes of $\mu$. Let the exterior of a tableau of $\lambda / \mu$ consist of the boxes that, if added to $\lambda$, the resulting diagram is still a partition. If the empty cell starts on the interior of the skew shape, it moves to the exterior, and vice versa. We will be referring to these moves as jeu de taquin slides.

\begin{example}\label{jeu_zero}
Starting with the tableau $\tikztableausmall{{    X,    X, 2,5 },{   1,3, 4}}$, jeu de taquin on the cell $(1,2)$ gives:

\[ \tikztableausmall{{    X,    , 2,5 },{   1,3, 4}} \rightarrow \tikztableausmall{{    X,    2, ,5 },{   1,3, 4}} \rightarrow \tikztableausmall{{    X,    2,4 ,5 },{   1,3, }} \rightarrow \tikztableausmall{{    X,   2 , 4 ,5 },{   1,3}} \]

\end{example}

\begin{example}
Starting with the same tableau, jeu de taquin on the cell $(2,4)$ gives:
\[ \tikztableausmall{{    X,    X, 2,5 },{   1,3, 4, \null}} \rightarrow \tikztableausmall{{    X,    X, 2, \null },{   1,3, 4, 5}}  \rightarrow \tikztableausmall{{    X,    X, \null, 2 },{   1,3, 4,5}} \rightarrow \tikztableausmall{{    X,    X, X,2 },{   1,3, 4,5}} \]
\end{example}

The following proposition will be proven at the end of this section.
\begin{proposition}\label{moremult}
For $\lambda$ a partition of $n$, let $l = n-\lambda_1$ and fix $k$. We have that
 \[\sum_{\mu: |\mu|=l +k} d^\mu \leq {l + k \choose l-1}d_{\lambda/\lambda_1,}\] 
where we are summing over all partitions $\mu$ with $|\mu| = n- \lambda_1 + k$ for which $\lambda/\mu$ is a horizontal strip.
\end{proposition}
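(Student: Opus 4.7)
The plan is to construct an injection from the pairs $(\mu,T)$ appearing on the left-hand side into the product of standard Young tableaux of shape $\bar\lambda:=[\lambda_2,\lambda_3,\ldots]$ with a set of size $\binom{l+k}{l-1}$, refining the row-decomposition argument of Proposition \ref{mult}.

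First I perform the shape-level reduction. Setting $\tilde\mu=[\mu_2,\mu_3,\ldots]$, a partition of $l+k-\mu_1$, the horizontal-strip condition $\lambda_{i+1}\le\mu_i\le\lambda_i$ directly yields $\tilde\mu\subseteq\bar\lambda$ and that $\bar\lambda/\tilde\mu$ is itself a horizontal strip, of size $\mu_1-k\ge 0$. The desarrangement property of $T$ forces the smallest ascent to be at least $2$, which in turn forces the consecutive entries $1,2,\ldots,s$ to occupy positions $(1,1),(2,1),\ldots,(s,1)$ in the first column of $T$ for some even $s\ge 2$, with the entry $s+1$ lying in some row $\le s$ and some column $\ge 2$.

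Next I convert $T$ into a SYT $S$ of $\bar\lambda$. The restriction $T_{\ge 2}$ of $T$ to rows of index $\ge 2$ is a filling of $\tilde\mu$ whose entries form a subset $E\subseteq\{1,\ldots,l+k\}$ of size $l+k-\mu_1$ containing $\{2,\ldots,s\}$ but not $1$. After order-preservingly relabeling to $\{1,\ldots,l+k-\mu_1\}$, $T_{\ge 2}$ becomes a SYT of $\tilde\mu$; since $\bar\lambda/\tilde\mu$ is a horizontal strip, this SYT extends canonically to a SYT $S$ of $\bar\lambda$ by filling the cells of $\bar\lambda/\tilde\mu$ with the values $l+k-\mu_1+1,\ldots,l$ in reading order (top row to bottom, left to right within each row). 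The horizontal-strip property ensures that the result is a valid SYT of $\bar\lambda$, and $\tilde\mu$ is recoverable from $(S,\mu_1)$ as the sub-shape carrying the values $\le l+k-\mu_1$.

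Finally, the auxiliary data needed to recover $(\mu,T)$ from $S$ consists of $\mu_1$ together with the first-row entries of $T$, namely a subset of $\{1,\ldots,l+k\}$ of size $\mu_1$ that contains $1$ and is disjoint from $\{2,\ldots,s\}$, equivalently a $(\mu_1-1)$-subset of $\{s+1,\ldots,l+k\}$. Parametrizing by admissible pairs $(s,\mu_1)$ with $s$ even, $s\le\mu_1'$, and $\mu_1\in[\lambda_2,\lambda_1]$, a binomial identity of Vandermonde/hockey-stick type should bound the total count by $\binom{l+k}{l-1}$; combined with the $d_{\bar\lambda}$ choices for $S$, this yields the claim. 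The main obstacle is making this final count rigorous: the naive bound $\sum_{(s,\mu_1)}\binom{l+k-s}{\mu_1-1}$ obtained by freely choosing first-row subsets is in general too large, so one must additionally exploit the column-increasing constraints imposed by the full SYT structure on $T$ (which rule out many first-row candidates, as entries in row $1$ below columns occupied by row $2$ are constrained from above) to compress the count into exactly $\binom{l+k}{l-1}$ uniformly in the parameters.
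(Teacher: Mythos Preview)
Your proposal has a genuine gap, and you correctly identify it yourself: the final count of auxiliary data does not close. The injection $(\mu,T)\mapsto(S,\mu_1,E^c)$ into $\mathrm{SYT}(\bar\lambda)\times\{\text{auxiliary}\}$ is fine, but for a fixed $S$ the number of admissible pairs $(\mu_1,E^c)$ is not obviously bounded by $\binom{l+k}{l-1}$. Even after imposing $2\notin E^c$ (forced by $s\ge 2$), the sum $\sum_{\mu_1}\binom{l+k-2}{\mu_1-1}$ over the allowable range of $\mu_1$ can exceed $\binom{l+k}{l-1}$; the column-increasing and desarrangement constraints you allude to are real, but you give no mechanism for converting them into the needed bound, and it is not clear one exists in this parametrization. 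Stratifying instead by $s$ does not help directly either, since for a fixed $S$ the value of $s$ is determined by the full triple $(S,\mu_1,E^c)$ and not by any single piece of it.

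The paper's proof avoids this difficulty by working on the other side of the Reiner--Saliola--Welker bijection: rather than decomposing the desarrangement tableau $T$ of shape $\mu$, it counts the standard Young tableaux $Q$ of shape $\lambda$ whose image under the inverse RSW map has size $l+k$. Such $Q$ are characterized by the pair $(a,b)$ describing the initial run $1,\ldots,a$ in the first row and $1,a+1,\ldots,a+b$ in the first column, with $a$ determined by $k$ and the parity of $b$. For each fixed $b$, the remaining freedom is a choice of $l-b$ entries from a set of size at most $l+k-b$ together with a tableau of shape $\lambda/\lambda_1$, giving $\binom{l+k-b}{l-b}d_{\lambda/\lambda_1}$. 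Summing over $b$ via the hockey-stick identity $\sum_{b\ge 1}\binom{l+k-b}{l-b}\le\binom{l+k}{l-1}$ yields the claim. The point is that passing through $\mathrm{SYT}(\lambda)$ rather than $\mathrm{SYT}(\bar\lambda)$ replaces your two free parameters $(\mu_1,E^c)$ by the single parameter $b$, and this is what makes the binomial sum collapse.
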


Reiner, Saliola and Welker proved that there is a bijection between tableaux of shape $\lambda$ and all desarrangment tableaux of the shapes $\mu$ for which $\lambda/\mu$ is a horizontal strip. The bijection can be described as: starting with $\hat{Q}$ a desarrangement tableau of shape $\mu$, with $\lambda/\mu$ a horizontal strip, append the cells of $\lambda/\mu$  as empty cells. Perform jeu de taquin slides into these empty cells in order from the leftmost to rightmost cell of $\lambda/\mu$. As a consequence of $\lambda/\mu$ being a horizontal strip, the resulting tableau has the first $|\lambda/\mu|$ cells of the first row of $\lambda$ empty. Filling the first row with $1,\ldots,|\mu|$ and incrementing the rest by $|\lambda/\mu|$ gives a standard tableau of shape $\lambda$.
 
 \begin{example}\label{jeu_one}
If $\lambda=\young(~~~~,~~~,~~)$, $\mu= \young(~~~,~~~,~)$ and $\hat{Q}= \young(134,267,5)$. Then the jeu de taquin gives
 $$ \young(134~,267,5~) \rightarrow \young(134~,2~7,56) \rightarrow \young(1~4~,237,56)\rightarrow \young(~14~,237,56)\rightarrow \young(~1~4,237,56)\rightarrow \young(~~14,237,56)$$
Reassigning the values, we get the following tableau:
$$ \young(1236,459,78)$$
\end{example}

Notice that the process described, results in a skew diagram that is missing labels for the cells only in the first row, exactly because $\lambda/\mu$ is a horizontal strip. To do the inverse of the above function, given a tableau, we want to retrieve a skew tableau that is missing labels for cells only in the first row. Reiner, Saliola and Welker describe the inverse process in the proof of Proposition 9.4 of \cite{RSW}. For completion, we are re-writing the the statement of their proposition, as applied to our case.
\begin{proposition}[Proposition 9.4, \cite{RSW}]\label{rsw14}
Given a tableau, find the unique $a,b$ so that the initial sequential entries in the first row are $1,2,\ldots,a$, the first column begins $1,a+1,\ldots,a+b$, and the cell containing $a+b+1$ is not in the first column. If $b$ is odd, remove $1,\ldots,a-1$, perform jeu de taquin slides from the rightmost box to the leftmost one, and subtract $a-1$ from all entries. If $b$ is even, remove $1,\ldots,a$, perform jeu de taquin slides from the rightmost box to the leftmost one, and subtract $a$ from all entries. This is the inverse function of the function directly described above Example \ref{jeu_one}.
\end{proposition}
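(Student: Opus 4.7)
The plan is to verify that the procedure defined in the proposition provides an inverse to the forward map $\Phi$ described just above Example~\ref{jeu_one}; combined with the counting equality $d_\lambda = \sum_\mu d^\mu$ (due to Reiner, Saliola and Welker), this shows both maps are mutually inverse bijections. The core tool is the reversibility of jeu de taquin: an inward slide from an exterior empty cell $p$ to an interior empty cell $q$ is uniquely undone by an outward slide from $q$ to $p$, and a composition of inward slides is reversed by the composition of the corresponding outward slides performed in the opposite order.

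First I would track the inward slides of $\Phi$ applied to a desarrangement tableau $\hat Q$ of shape $\mu$ with smallest ascent $2m$; by definition the entries $1, 2, \ldots, 2m$ sit at positions $(1,1), (2,1), \ldots, (2m,1)$ of $\hat Q$. A careful comparison of entries at each slide step shows that each inward slide ending at $(1,1)$ displaces the value $1$ either into row $1$ (after which subsequent slides further shunt it rightward along row $1$, so that $1$ finally lands at position $(1, s+1)$ of the intermediate tableau $\tilde T$, where $s = |\lambda/\mu|$), or into column $1$ (landing at $(2, 1)$, while the remaining column-$1$ entries $2, 3, \ldots, 2m$ shift down by one row). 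Only one slide can ever enter column $1$, because once $(1, 1)$ becomes empty no further slide can pass through it. These two geometric behaviours will correspond exactly to the $b$ odd and $b$ even cases of the proposition.

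Next I would read off the parameters $a$ and $b$ of $T = \Phi(\mu, \hat Q)$ in each case. In the row-$1$ behaviour, $(1, s+1)$ of $T$ equals $s + 1$, while $(1, s+2)$ of $T$ exceeds $s + 2$ because $2$ remains at $(2, 1)$ of $\tilde T$; the first column of $T$ reads $1, s+2, s+3, \ldots, s+2m$ followed by an entry at least $s + 2m + 2$, so $a = s + 1$ and $b = 2m - 1$ is odd, with $a - 1 = s$. In the column-$1$ behaviour, row $1$ of $T$ stops its initial consecutive sequence at position $(1, s)$ (since $1$ sits at $(2, 1)$ rather than extending row $1$), while the first column reads $1, s+1, s+2, \ldots, s + 2m$ followed by a larger entry; hence $a = s$ and $b = 2m$ is even, with $a = s$. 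In both cases the prefix of removed entries has length exactly $s$ and the subtraction by $a - 1$ or $a$ undoes the shift by $s$.

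Finally I would invoke the reversibility of jeu de taquin. Once $\tilde T$ has been recovered from $T$ by the removal-and-subtraction step (with the correct case selected by the parity of $b$), the outward slides performed from rightmost empty cell to leftmost reverse the sequence of inward slides of $\Phi$ in the opposite order, recovering $\hat Q$ of shape $\mu$. The main obstacle is the detailed slide-by-slide bookkeeping required in the first step: establishing the dichotomy of slide behaviours, and in particular showing that the desarrangement condition on $\hat Q$ forces any column-$1$ excursion of a slide path to shift the initial segment $1, 2, \ldots, 2m$ of column $1$ down by exactly one row. This reduces to a short inductive comparison of entries at positions $(k-1, 2)$ and $(k, 1)$ along each slide path, after which the rest of the argument is essentially formal.
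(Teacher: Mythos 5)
First, a structural note: the paper does not prove Proposition \ref{rsw14} at all --- it is quoted from Proposition 9.4 of \cite{RSW} (the text above Example \ref{jeu_one} only restates the forward map and its inverse), so your proposal is necessarily a different route: an actual proof where the paper offers a citation. Your skeleton is the right one (track the forward slides, identify the two behaviours of the entry $1$, read off $a$ and $b$, undo the slides by reversibility of jeu de taquin), and your case formulas are correct: row-$1$ behaviour gives $a=s+1$, $b=2m-1$; column-$1$ behaviour gives $a=s$, $b=2m$. Moreover your proposed comparison of the entries at $(k,1)$ and $(k-1,2)$ does settle one of the needed points: in a desarrangement tableau the entries $1,\dots,2m$ fill the top of column $1$ and every other entry is at least $2m+1$, so a hole can only enter column $1$ below the initial segment, and the segment shifts down by exactly one row.

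The genuine gap is the step you defer as ``bookkeeping'': you never establish that the $s$ slides deposit their holes exactly at $(1,1),\dots,(1,s)$ (in that order), which is what makes the intermediate tableau $\tilde T$, the claim that $1$ ends at $(1,s+1)$, and the whole identification of $(a,b)$ meaningful. The one justification you give --- ``only one slide can ever enter column $1$, because once $(1,1)$ becomes empty no further slide can pass through it'' --- does not address the real failure mode: a later slide whose hole, while travelling in row $2$, drifts left over the already-vacated part of row $1$. Once such a hole reaches a column where the cell above it is empty or contains $1$, every subsequent comparison sends it further left and it terminates at $(2,1)$, not in row $1$, wrecking the construction; nothing in your sketch rules this out. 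What is needed is a genuine inductive invariant, e.g.\ that the column at which slide $k$ turns up from row $2$ into row $1$ is strictly to the right of that of slide $k-1$ (because the previous slide shifted the smaller row-$1$ entries one step right and pulled a larger entry down at its turning column), or alternatively an appeal to jeu-de-taquin/tableau-switching theory showing that the vacated cells form the rectification of the horizontal strip, namely a single row vacated left to right. Until that is supplied, the dichotomy of the first paragraph and the $(a,b)$ computation of the second rest on an unproved claim. A minor further point: invoking $d_\lambda=\sum_\mu d^\mu$ to upgrade a one-sided inverse to a bijection is circular in this paper's context, since that identity is itself deduced from the RSW bijection; it is cleaner to verify both compositions directly, or to note that only injectivity of the forward map is needed for Proposition \ref{moremult}.
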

\begin{example}
Looking at 
$$ \young(1236,459,78)$$
we see that $a=3$ and $b=1$. Therefore, we delete the entries $1$ and $2$ and we perform the jeu de taquin as follows:
$$ \young(~~36,459,78) \rightarrow \young(~3~6,459,78) \rightarrow \young(~36~,459,78) \rightarrow \young(3~6~,459,78) \rightarrow \young(356~,4~9,78) \rightarrow \young(356~,489,7~) $$
Then we subtract $a=2$ from the entries and we get exactly 
$\hat{Q}= \young(134,267,5)$, which we started with in Example \ref{jeu_one}.
\end{example}


\begin{proof}[Proof of Proposition \ref{moremult}]

We will bound the number of tableaux of shape $\lambda$ that give rise to a partition of size $n-\lambda_1 + k$ after performing the function of Proposition \ref{rsw14}. Let $l = n-\lambda_1$. The number of blocks removed must be $n-(l + k) = n-l -k$. We will split this into cases based on the parity of $b$.

For $b$ odd, this means the $a$ described in Proposition \ref{rsw14} is given by $a = \lambda_1-k+1$. We need to count tableaux with first row starting $1,\ldots,a$ and first column starting $1,a+1,\ldots,a+b$. An upper bound for the number of such tableaux can be computed by selecting any $l-b$ elements of $a+b+1,\ldots,n$ as the entries that appear below the first row in any tableaux of shape $\lambda/[\lambda_1,1^b]$, and the remaining entries in order in the remaining positions of the first row. This is at most \[{n - a- b \choose l-b} d_{\lambda/\lambda_1} ={l +k -b-1 \choose l-b} d_{\lambda/\lambda_1} \leq {l +k -b \choose l-b} d_{\lambda/\lambda_1} .\]

For $b$ even, letting $a = \lambda_1 -k$, we need to count tableaux with first row $1,\ldots,a$ and first column starting $1,a+1,\ldots,a+b$. Using the same approximation technique, the number of such tableaux is at most:
\[ {n -a- b \choose l-b} d_{\lambda/\lambda_1} = {l +k -b \choose l-b} d_{\lambda/\lambda_1}. \]

Note that the final form of the bound is the same for both $b$ odd and even, so summing over values of $b$ of the binomial coefficients using the combinatorial identity $\sum_{i=0}^m {n+i \choose i} = {n+m+1 \choose m}$ is:

\[ \sum_{b=1}^l {l + k -b \choose l-b} \leq {l+k \choose l-1}.\]
Therefore,  
\[\sum_{|\mu|=n-\lambda_1 +k} d^\mu  \leq \sum_{b=1}^l {l + k -b \choose l-b}d_{\lambda/\lambda_1} \leq {l+k \choose l-1} d_{\lambda/\lambda_1},\] 
which finishes the proof of the proposition.
\end{proof}

\section{$\frac{3}{4}n \log{n}- \frac{1}{4}\log\log n$ upper bound}\label{prf}
In this section, we present the proof of Theorem \ref{rtr}.
To achieve the upper bound of $\frac{3}{4}n \log n - \frac{1}{4}n \log\log{n} +cn$, it is necessary to take advantage of how $\eig(\lambda/\mu)$ decreases and $d^\mu$ increases as more blocks are added to $\mu$. Only the eigenvalues for partitions $\mu$ of size at most $n-\lambda_1+\sqrt{(n-\lambda_1)n}$ will significantly contribute. This will reduce the contribution of $d_\lambda d^\mu$ to be considered from order $n^{2(n-\lambda_1)}$ to $n^{\frac{3}{2}(n-\lambda_1)}$ leading to an upper bound of $t = \frac{3}{4}n\log(n) +cn$. To further refine the upper bound to show cutoff with window, one must take advantage of the decay in this initial group of eigenvalues. We show this is closely approximated by passing to a Gaussian integral.

To illustrate this, we will consider as a special case the terms for $\lambda = [n-1,1]$, $\mu= [k,1]$. The largest eigenvalue corresponds to $k=1$ with multiplicity $n-1$. If the mixing time were given only by the time it takes for $d_{[n-1,1]}d^{[1,1]} (\eig({[n-1,1]/[1,1]}))^{2t} = (n-1)\left( 1- \frac{n+2}{n^2}\right)^{2t}$ to be exponentially small, the mixing time would be $\frac{1}{2}n\log n $. Some of the spectrum of this walk is closely bunched around this eigenvalue, and so still contributes to the mixing time. If it were entirely bunched around this eigenvalue, the mixing time would be $n\log n$. The fact that the spectrum is spread out gives rise to the odd coefficient of $\frac{3}{4}$. 
\begin{proposition}\label{largesteig}
For $t = \frac{3}{4}n\log n - \frac{1}{4}n\log\log{n}+ cn,$ we have that
$$ \sum_{k=1}^{n-1} d_{[n-1,1]}d^{[k,1]} (\eig([n-1,1]/[k,1]))^{2t} = \sum_{k=1}^{n-1} (n-1)\left( 1- \frac{n+k^2+k}{n^2}\right)^{2t} \leq  e^{-2c},$$
where $c>0$.
\end{proposition}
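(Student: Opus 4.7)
The plan is to apply the elementary inequality $1-x \leq e^{-x}$ to each summand and then separate the contribution of the linear-in-$n$ term from the quadratic-in-$k$ term in the exponent. Writing
$$\left(1 - \frac{n + k^2 + k}{n^2}\right)^{2t} \leq \exp\!\left(-\frac{2t}{n}\right)\exp\!\left(-\frac{2t(k^2+k)}{n^2}\right),$$
the factor $\exp(-2t/n)$ is independent of $k$ and can be pulled out, so the full expression to estimate becomes at most
$$(n-1)\exp\!\left(-\frac{2t}{n}\right)\sum_{k=1}^{n-1}\exp\!\left(-\frac{2t(k^2+k)}{n^2}\right).$$

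Plugging in $t = \frac{3}{4}n\log n - \frac{1}{4}n\log\log n + cn$ gives $\frac{2t}{n} = \frac{3}{2}\log n - \frac{1}{2}\log\log n + 2c$, and hence
$$(n-1)\exp\!\left(-\frac{2t}{n}\right) \leq n^{-1/2}(\log n)^{1/2}\,e^{-2c}.$$
For the remaining sum, set $\alpha := 2t/n^2$; dropping the positive $\alpha k$ in the exponent (which only enlarges the sum) and using that $x \mapsto e^{-\alpha x^2}$ is non-negative and decreasing on $[0,\infty)$, I would compare to the Gaussian integral
$$\sum_{k=1}^{n-1}\exp(-\alpha(k^2+k)) \leq \sum_{k\geq 1} e^{-\alpha k^2} \leq \int_0^{\infty}e^{-\alpha x^2}\,dx = \frac{1}{2}\sqrt{\pi/\alpha} = \frac{1}{2}\sqrt{\frac{2\pi n}{3\log n - \log\log n + 4c}}.$$

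Multiplying these two estimates, the $n^{\pm 1/2}$ factors cancel and so do the $(\log n)^{\pm 1/2}$ factors, leaving
$$e^{-2c}\sqrt{\frac{\pi \log n}{2(3\log n - \log\log n + 4c)}} \;\longrightarrow\; e^{-2c}\sqrt{\pi/6}$$
as $n \to \infty$. Since $\sqrt{\pi/6} < 1$, for $n$ sufficiently large the product is at most $e^{-2c}$, as required. The delicate point — and the reason the $-\frac{1}{4}n\log\log n$ correction appears in the mixing time — is the exact cancellation of the $\sqrt{\log n}$ produced by $\exp(-2t/n)$ against the $1/\sqrt{\log n}$ produced by the Gaussian comparison; a naive $t = \frac{3}{4}n\log n + cn$ would leave an unwanted $\sqrt{\log n}$ blowup, while any $t$ smaller than $\frac{3}{4}n\log n$ in leading order would fail to beat the multiplicity $n-1$. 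The rest of the argument is a routine Gaussian integral estimate.
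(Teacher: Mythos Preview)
Your proof is correct and follows essentially the same route as the paper: apply $1-x\le e^{-x}$, pull out the $k$-independent factor $(n-1)e^{-2t/n}\le n^{-1/2}(\log n)^{1/2}e^{-2c}$, and compare the remaining sum to the Gaussian integral $\int_0^\infty e^{-\alpha x^2}\,dx$, after which the $n^{\pm 1/2}$ and $(\log n)^{\pm 1/2}$ factors cancel. The only cosmetic differences are that the paper first factors $1-\tfrac{n+k^2+k}{n^2}=(1-\tfrac1n)\bigl(1-\tfrac{k^2+k}{n^2-n}\bigr)$ before exponentiating and retains the linear $k$ term when passing to the integral, whereas you exponentiate directly and drop the $\alpha k$ term; both variants lead to the same asymptotic constant $\sqrt{\pi/6}<1$ and hence the bound $e^{-2c}$ for $n$ large.
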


\begin{proof}
The tableaux of shape $[n-1,1]$ have $(1,1)$ entry $1$ and have $n-2$ further increasing entries in the first row, and so are determined by which of $2,\ldots,n$ is in $(2,1)$, so $d_{[n-1,1]}= n-1$. The number of desarrangement tableaux of shape $[k,1]$ is $1$ since the first column must be sequential up to an even number. The only option for the $(2,1)$ entry is $2$, so $d^{[k,1]} = 1$.

Computing these eigenvalues from equation \eqref{eigen} gives, $\eig([n-1,1]/[k,1]) = 1 - \frac{n+k^2 + k}{n^2}$. To bound the contribution of these eigenvalues to the mixing time, we will pass from a sum to a Gaussian integral,
\begin{align} \sum_{k=1}^{n-1} (n-1)\left( 1- \frac{n+k^2+k}{n^2}\right)^{2t} 
& = (n-1)\left(1 - \frac{1}{n}\right)^{2t}\sum_{k=1}^{n-1}\left( 1- \frac{k^2+k}{n^2-n}\right)^{2t} \nonumber\\ 
& \label{in} \leq e^{-2t/n + \log{n}} \sum_{k=1}^{n} e^{-2t \frac{k^2+k}{n^2-n}}\\
& \label{riem} \leq e^{-\frac{1}{2}\log{n} + \frac{1}{2}\log\log{n} - 2c} \int_{0}^n e^{-2t \frac{x^2 + x}{n^2-n}} dx\\
& \label{sub}\leq e^{-2c}\sqrt{\frac{\log{n}}{n}}\int_{0}^n e^{-2t \frac{x^2}{n^2-n}} dx\\
& \label{sub2}\leq e^{-2c} \sqrt{\frac{\log{n}}{n}} \sqrt{\frac{n^2-n}{2t}}\int_0^\infty e^{-x^2} dx\\
& \label{l} \leq e^{-2c}
\end{align}
Inequality \eqref{in} doubly utilizes the inequality $1-x \leq e^{-x}$. Next, since the function $e^{-C(k^2+k)}$ is decreasing and we are integrating from $0$ to $n$ but summing from $1$ to $n-1$, the equivalent Riemman sum falls below the curve, and the \eqref{riem} holds. A substitution to remove the constant in the integral, and increase in the integral being integrated over gives \eqref{sub} and \eqref{sub2}. Finally, both the product of square roots and the integral are bounded by $1$, which gives \eqref{l}.
\end{proof}

For all other $\lambda$, the $d^{\mu}$ vary, and so understanding how they are distributed is essential in bounding $\sum_\mu d_\lambda d^\mu (\eig(\lambda/\mu))^{2t}$. This is where we will use the bounds of sections \ref{eigenvalues} and \ref{boundmult}. We are now ready to prove our main result:

\begin{reptheorem}{rtr}
For the random to random card shuffle, if $t=  \frac{3n}{4} \log n - \frac{1}{4}n\log\log{n} +c n$ with $c \geq 2$ and $n$ sufficiently large, then
\begin{equation*}
||P_{id}^{*t}-\pi ||_{\TV} \leq e^{-c}.
\end{equation*}
\end{reptheorem}
\begin{proof}
The $l^2$ bound of Lemma \ref{l2} gives that
\begin{align}
4||P^t - \pi ||^2_{\TV} \leq \left|\left| \frac{P^t(x,\cdot)}{\pi(\cdot)} -1 \right|\right|^2_2 &\leq \sum_{\lambda,\mu} d_{\lambda} d^{\mu} \eig(\lambda/\mu)^{2t} \nonumber\\
\label{eigb} & \leq \sum_{l=1}^{n-1}\sum_{\lambda: \lambda_1 = n-l} d_\lambda \sum_{k=0}^{n-l} \left(\sum_{\mu: |\mu| =l+k} d^\mu\right) \left(1 - \frac{l}{n} - \frac{k^2 +kl}{n^2} \right)^{2t}  \\
\label{m}
& \leq \sum_{l=1}^{n-1}\sum_{\lambda: \lambda_1 = n-l} {n \choose l} d_{\lambda/\lambda_1} \sum_{k=0}^{n-l} {k+l \choose l-1} d_{\lambda/\lambda_1}\left(1 - \frac{l}{n} - \frac{k^2 +kl}{n^2} \right)^{2t} \\ 
\label{denom}
&\leq \sum_{l=1}^{n-1}\sum_{\lambda: \lambda_1 = n-l} {n \choose l} d_{\lambda/\lambda_1}^2 \left(1 - \frac{l}{n}\right)^{2t}\sum_{k=0}^{n-l} {k+l \choose l-1}\left(1 -\frac{k^2 +kl}{n^2} \right)^{2t}  \\
& \label{initial}\leq \sum_{l=1}^{n-1} n^l e^{\frac{-2tl}{n}} \sum_{k=0}^{n-l} {k+l \choose l-1} e^{-2t \frac{k^2 +kl}{n^2}}
\end{align}
where in \eqref{eigb} we use Proposition \ref{ieb}. In equation \eqref{denom}, we use that $1-x-y \leq (1-x)(1-y)$. In \eqref{m} we use Proposition \ref{moremult}, and in \eqref{initial} we use Proposition \ref{mult}, plus as a result of the Robinson-Schensted-Knuth correspondence $\sum_{\lambda: \lambda_1 = n-l} d_{\lambda/\lambda_1}^2= l!$, that $l!\binom{n}{l} \leq n^l$, and two applications of $1-x \leq e^{-x}$. 

For the terms with $l \geq n/2$, $\eig(\lambda,\mu) \leq 1 - \frac{l}{n} \leq 1/2$, and, we see the crude bound, 
\begin{equation}\label{crude}
\sum_{l\geq n/2}\sum_{\lambda: \lambda_1=n-l} d_\lambda^2 \eig(\lambda,\mu_\lambda)^{2t} \leq n! \cdot 2^{-2t}\leq e^{n\log{n} - \frac{3}{2}\log(2) n \log{n} + \frac{1}{2}\log(2)n\log\log{n} -2\log(2)cn} \leq e^{-2c},
\end{equation}
where the last two inequalities follow from $n! \leq n^n$, using that for $c \geq 2$ and $n \geq 2/(2\log{2}-1)$ we have $2\log(2)cn \geq n + 2c$ and that for $n>1$,
\[ n\log{n} - \frac{3}{2}\log(2) n \log{n} + \frac{1}{2}\log(2)n\log\log{n} -n <0.\]  
Interestingly, these small eigenvalues are decaying so quickly, the arguments for the transposition walk of Diaconis and Shahshahani \cite{DiaSha} in their outer and mid zones show for these eigenvalues the much smaller $t = \frac{1}{2}n\log{n} + cn$ would suffice.  

Proposition \ref{largesteig} covers the case $l=1$ which is attained by $\lambda = [n-1,1]$. The majority of the remainder of the proof consists of bounding for $2 \leq l \leq n/2$ the sum over $k$ in equation \eqref{initial}. We will show the sum is at most $e^{2l}\left(\frac{n^2}{2t}\right)^{l/2}$ by passing to a Gaussian integral after completing the square in the exponent. Once we have shown this, the proof resumes at \eqref{flourish}.

\begin{align} 
&  \sum_{k=0}^{n-l} \binom{l+k}{l-1} e^{-\frac{2t(k^2 +kl)}{n^2}} \nonumber \\ 
\label{bin} \leq & \frac{2^{l-1}}{(l-1)!}e^{tl^2/(2n^2)}\sum_{k=0}^{n-l} (k+l/2)^{l-1} e^{-\frac{2t(k+l/2)^2}{n^2}}\\
\label{bound}\leq & \frac{2^{l-1}}{(l-1)!}e^{tl^2/(2n^2)}\left(\int_{0}^\infty  (k+l/2)^{l-1} e^{-\frac{2t(k+l/2)^2}{n^2-nl}} dk + \sup_{k \geq 0}  (k+l/2)^{l-1} e^{-\frac{2t(k+l/2)^2}{n^2}}\right)\\
\label{Gam} \leq & \frac{2^{l-1}}{(l-1)!}e^{tl^2/(2n^2)}\left(  \Gamma(l/2) \left(\frac{n^2}{2t} \right)^{l/2} +  \left(\frac{(l-1)/2}{e}\right)^{(l-1)/2}\left( \frac{n^2}{2t}\right)^{(l-1)/2}\right)\\
\label{cont} \leq & \frac{2^{l-1}}{(l-1)!}e^{tl^2/(2n^2)}\left(  \Gamma(l/2)  +  \left(\frac{(l-1)/2}{e}\right)^{(l-1)/2} \right) \left(\frac{n^2}{2t} \right)^{l/2}
\end{align}

For \eqref{bin}, we  note that since $k \geq 0$ and $l\geq 2$, $\binom{l+k}{l-1} \leq \frac{(l+k)^{l-1}}{(l-1)!} \leq \frac{2^{l-1} (l/2+k)^{l-1}}{(l-1)!}$. We also complete the square in the exponent $e^{-\frac{2t(k^2 +kl)}{n^2}}$ and this is why we end up with the term $e^{tl^2/(2n^2)}$ outside the sum in \eqref{bin}. 

For \eqref{bound}, we are addressing the error introduced by passing from a sum to an integral. We are summing $g(k) = (k+l/2)^{l-1} e^{-2t(k+l/2)^2/n^2}$ from $k=0$ to $n-l$. The function $g(x)$ is increasing to where $(x+l/2)^2 = \frac{l-1}{2}\frac{n^2}{2t}$ and then decreasing. When the function is increasing, the right Reimann sum is bounded by the integral, and similarly, when the function is decreasing the left Reimann sum is bounded by the integral. We just need to add the value of the function when the transition happens. Therefore, $\sum_{x=1}^{n-l} g(x) \leq \int_0^\infty g(x) dx + \sup_{x \geq 0} g(x)$. The integral and supremum will be of remarkably similar order. The supremum is $\left(\frac{(l-1)/2}{e}\right)^{(l-1)/2}\left( \frac{n^2}{2t}\right)^{(l-1)/2},$ which is used in \eqref{Gam}.

For the rest of  \eqref{Gam}, we recall the definition of the Gamma function $$\Gamma(s)=\int_0^\infty x^{s-1} e^{- x} dx.$$
Performing the substitution $u=a x^2$, we get that  $\int_0^\infty x^c e^{-a x^2} dx = \frac{\Gamma((c+1)/2)}{2a^{(c+1)/2}}$. This leads to the equality $\int_{0}^\infty  (k+l/2)^{l-1} e^{-\frac{2t(k+l/2)^2}{n^2}} dk =\Gamma(l/2) \left(\frac{n^2}{2t} \right)^{l/2}$.

It remains to show that $\eqref{cont}/\left(\frac{n^2}{2t} \right)^{l/2} \leq e^{2l}$. We have that
\begin{align}
\eqref{cont}/\left(\frac{n^2}{2t} \right)^{l/2}&= \frac{2^{l-1}}{(l-1)!}e^{tl^2/(2n^2)}\left(  \Gamma(l/2)  +  \left(\frac{(l-1)/2}{e}\right)^{(l-1)/2} \right) \cr
&\label{Stir}\leq 2^{l-1} e^{tl^2/(2n^2)}\left( \frac{2e\sqrt{\pi\frac{(l-1)}{2}} \left(\frac{l-1}{2e}\right)^{\frac{l-1}{2}}}{\sqrt{2\pi (l-1)}\left(\frac{l-1}{e}\right)^{l-1}} \right)\\
&\label{t}\leq 2^{l-1} e^{\frac{l-1}{2}\log (l-1)  + \frac{1}{2}(l-1) + \frac{l-1}{2}\log{2}} \left( \frac{ e^{(l+1)/2}}{(2(l-1))^{\frac{l-1}{2}}} \right)\\
& = 2^{l-1} e^{\frac{1}{2}(l-1) + \frac{l-1}{2}\log{2}}  e(e/2)^{(l-1)/2}\\
& = 2^{l-1}e^{l}\\
& \label{g} \leq e^{2l}
\end{align}

For \eqref{Stir}, we have that $\Gamma(l/2) +  \left(\frac{(l-1)/2}{e}\right)^{(l-1)/2}\leq 2e\sqrt{\pi(l-1)/2} \left((l-1)/(2e)\right)^{(l-1)/2}$. This is because of the following properties of the Gamma function, that can be found in Lemma 1.2 and Theorem 1.4 of \cite{Stein}:
$\Gamma(1/2)= \sqrt{\pi}$, $\Gamma(1) =1$, and $\Gamma(n) = (n-1)\Gamma(n-1),$ for any real $n$. Iterating, we can bound $\Gamma(l/2) \leq \sqrt{\pi}\left((l-1)/2\right)!$ for $l$ odd and $(\frac{l}{2}-1)!$ for $l$ even. Using the following form of Stirling's approximation \cite{StirlingRobbins} $$\sqrt{2\pi n}(n/e)^n \leq n! \leq e \sqrt{n} (n/e)^n,$$ 
which works for all values of $n$, we see that $(l-1)! \geq \sqrt{2\pi (l-1)}((l-1)/e)^{l-1} $ and $\Gamma(l/2) +  \left(\frac{(l-1)/2}{e}\right)^{(l-1)/2}\leq 2e\sqrt{\pi(l-1)/2} \left((l-1)/(2e)\right)^{(l-1)/2}$, as desired. 

For \eqref{t}, we want to prove that $tl^2/(2n(n-l)) \leq \frac{l-1}{2}\log (l-1)  + \frac{1}{2}(l-1) + \frac{l-1}{2}\log{2}.$ Recall that $l \leq n/2$. Letting $t=  \frac{3n}{4} \log n - \frac{1}{4}n\log\log{n} +c n$, we get that 
\begin{equation}\label{a}
tl^2/(2n^2) \leq \frac{3l^2\log{n}}{8n}.
\end{equation}
Since $\log{x}/x$ is decreasing for $x \geq e$, we have that $\frac{\log{n}}{n} \leq \frac{\log{l}}{l}$, and
\begin{equation}\label{b}
\frac{3l^2\log{n}}{8n} \leq \frac{l}{2}\log{l}.
\end{equation}
Finally, using that $l\geq 2$, 
\begin{equation}\label{c}
\frac{l}{2}\log{l} = \frac{l-1}{2}\log (l-1) + \frac{1}{2}\log{l} + \frac{l-1}{2}\log{\frac{l}{l-1}}\leq  \frac{l-1}{2}\log (l-1)  + \frac{1}{2}(l-1) + \frac{l-1}{2}\log{2}.
\end{equation}
Equations \eqref{a}, \eqref{b} and \eqref{c} give exactly that $tl^2/(2n^2) \leq \frac{l-1}{2}\log (l-1)  + \frac{1}{2}(l-1) + \frac{l-1}{2}\log{2}$. The rest of \eqref{t} is simplifying the terms inside the last parenthesis. 

Finally, \eqref{initial}, \eqref{crude}, \eqref{cont}, \eqref{g}, combined with the fact that for $n \geq 1$, $t \geq \frac{1}{2}n\log{n}$, and so $e^{-l/2\log(2t)} \leq e^{-l/2\log\left(n\log{n}\right)}$, give that

\begin{align} 
\label{flourish} 4||P^t - \pi ||^2_{\TV} &\leq \left|\left| \frac{P^t(x,\cdot)}{\pi(\cdot)} -1 \right|\right|^2_2 \leq e^{-2c} + e^{-2c}+ \sum_{l=2}^{n/2}n^l e^{-2tl/n} e^{2l}\left(\frac{n^2}{2t} \right)^{l/2} \\
\nonumber \leq &2e^{-2c}+\sum_{l=2}^{n/2}e^{2l\log{n}+2l - 2tl/n - l/2\log{2t}}\\
\nonumber \leq &2e^{-2c}+\sum_{l=2}^{n/2}e^{2l\log{n}+2l - 3l/2\log{n} + l/2\log\log{n} - 2cl -l/2\log{n} - l/2\log\log{n}}\\
\nonumber \leq &2e^{-2c}+\sum_{l=2}^{n/2}e^{-2(c-1)l}\\
\nonumber \leq & 4e^{-2c},
\end{align}
since $\sum_{l=2}^{n/2} e^{-2(c-1)l} \leq \frac{e^{-4(c-1)}}{1 - e^{-2(c-1)}} \leq 2e^{-2c}$ for $c \geq 2$. Therefore, $4\left|\left| P^t(x,\cdot)-\pi(\cdot) \right|\right|_{\TV}^2 \leq 4e^{-2c}$ and $||\left|\left| P^t(x,\cdot)-\pi(\cdot) \right|\right|_{\TV} \leq e^{-c}$. This completes the proof of Theorem \ref{rtr}.
\end{proof}

\section{Random to random shuffle with repeated cards}\label{generalization}
In this section, we briefly examine the case where the $n$ cards of the deck are not necessarily distinct. Let $\nu=[\nu_1,\ldots, \nu_m]$ be a partition of $n$. Then we can perform the random to random shuffle on a deck which has $\nu_i$ cards of type $i$, where $i=1,\ldots ,m$. We then say that the deck has evaluation $\nu$.
 \begin{reptheorem}{word}
For the random to random shuffle on a deck with evaluation $\nu=[\nu_1,\ldots, \nu_m]$ with $\nu_1 \neq n$, we have that 
\begin{enumerate}
\item For $t=  \frac{3}{4} n\log n - \frac{1}{4}n\log\log{n} +cn$ and $n$ sufficiently large, then 
\begin{equation*}
||P_{id}^{*t}-\pi ||_{TV} \leq \frac{1}{2}\left|\left| \frac{P_{id}^{*t}}{\pi} -1 \right|\right|_2  \leq  e^{-c}.
\end{equation*}
where $c>2$.
\item For $t= \frac{n}{4} \log m + \frac{n}{8} \log n -cn$, then 
\begin{equation*}
\left|\left|\frac{P_{id}^{*t}}{\pi}-1 \right|\right|_{2} \geq \frac{1}{2}e^{c}.
\end{equation*}
where $c>0$.
\end{enumerate}
\end{reptheorem}
The proof is similar to the proof of Theorem \ref{rtr}, therefore we will quickly review what we need for this case. Dieker and Saliola \cite{DSaliola} proved that the eigenvalues of this walk are given by 
\begin{equation}\label{eigen2}
\eig( \lambda/ \mu) = \frac{1}{n^2} \left( {n+1 \choose 2} - {|\mu| + 1 \choose 2} + \Diag(\lambda) - \Diag(\mu) \right)
\end{equation}
with the restriction that $\lambda \trianglerighteq \nu$ which means that 
$$\lambda_1+\ldots +\lambda_i \geq \nu_1 +\ldots +\nu_i$$
for every $i=1,\ldots,m$. The multiplicity of such an eigenvalue is $$\sum_{\substack{\lambda/\mu \mbox{ is a horizontal strip}\\ \lambda \trianglerighteq \nu } } K_{\lambda,\nu} d^{\mu}$$
where $K_{\lambda,\nu} $ is the number of semistandard 
Young tableaux (SSYT) of shape $\lambda$ and evaluation $\nu$ and $d^{\mu}$ is the number of desarrangement tableaux of shape $\mu$, just like before.

\begin{proof}[Proof of Theorem \ref{word}]
The first part of Theorem \ref{word} is easy to prove because $K_{\lambda,\nu} \leq d_{\lambda}$. Therefore, imitating the proof of Theorem \ref{rtr} we get that for $t=  \frac{3}{4}n \log n - \frac{1}{4}n\log\log{n} +cn$, then 
\begin{equation*}
\frac{1}{2}\left|\left|\frac{P_{id}^{*t}}{\pi}-1 \right|\right|_{2} \leq  e^{-c}.
\end{equation*}. 

For the second part we only need to check the case where $\lambda= [n-1,1]$, for which we have that $\lambda \unrhd \nu$. In this case, $K_{\lambda,\nu} =m-1$ and $d^{\mu}=1$. Just like in Proposition \ref{largesteig}, if $t= \frac{n}{4} \log m + \frac{n}{8} \log n -cn$, then
\begin{align*} \sum_{k=1}^{n-1} (m-1)\left( 1- \frac{n+k^2+k}{n^2}\right)^{2t} \geq \sum_{k\leq \sqrt{n}-1}(m-1) \left(1- \frac{2}{n}\right)^{2t} =\left(\sqrt{n}-1\right)(m -1)\left(1- \frac{2}{n}\right)^{2t} \geq \frac{1}{2} e^{2c},
\end{align*}
using that for $x \leq \frac{1}{2}$, $1-x \geq e^{-x(1+x)}$ and $c>0$.
\end{proof}

\section{An application to a time inhomogeneous card shuffle}\label{lastsection}
Recently, there has been significant interest in time inhomogeneous chains, in which the generators of the walk are not the same at each step. For example, the card-cyclic to random shuffle at time $t$, the card with the label $t \mod {n+1}$ is removed from the deck and gets inserted it to a uniformly random position of the deck.  It has been studied by Pinsky \cite{Pinsky}, Morris, Ning, Peres \cite{MNP} and Saloff-Coste and  Z{\'u}{\~n}iga \cite{SZ1}, \cite{SZ2}, \cite{SZ3}, \cite{SZ4}. Since the eigenvalues of the random to random shuffle correspond to singular values for the random to top card shuffle, it is possible to bound the $\ell^2$ mixing time of card-cyclic to random in terms of the $\ell^2$ mixing time of the random to random shuffle. 

The current best upper bound for the mixing time of card-cyclic to random is $4 n \log n$ and it is due to Saloff-Coste and Z{\'u}{\~n}iga \cite{SZ1}. Theorem \ref{rtr} gives the following improvement on the mixing time of card-cyclic to random.
\begin{corollary}
For $t=\frac{3}{2} n \log n + cn$, where $c>0$,  and $x \in S_n$, we have that
$$ 4\lVert Q_{x}^t-U \rVert^2_{T.V.} \leq e^{-c} $$
where $Q$ is the transition matrix of cyclic to random insertions and $U$ is the uniform measure.
\end{corollary}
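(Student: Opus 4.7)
The plan is to bound the $\ell^2$ distance of the card-cyclic-to-random chain by an expression of the form $\sum_{i\geq 2}\lambda_i(P)^t$, where $P = P_{\mathrm{RR}}$ is the random-to-random transition matrix, and then apply Theorem~\ref{rtr} at the time scale $t/2$. I would begin, exactly as in the proof of Theorem~\ref{rtr}, with
$$4\|Q_x^t - U\|_{\TV}^2 \leq \left\|\frac{Q_x^t}{U} - 1\right\|_2^2,$$
and then identify the algebraic structure of each individual step. Writing $Q_i$ for the operator that removes card $i$ and reinserts it at a uniformly random position, one checks directly that $Q_i^* = Q_i$ and $Q_i^2 = Q_i$, so $Q_i$ is an orthogonal projection onto functions that do not depend on the position of card $i$. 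Moreover $P = \frac{1}{n}\sum_{i=1}^n Q_i$, and as in Lemma~\ref{positivity} one has $P = AA^*$ where $A$ is the random-to-top operator, so the nontrivial singular values of $A$ squared are exactly the nontrivial eigenvalues of $P$.

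I would then invoke the time-inhomogeneous $\ell^2$ framework of Saloff-Coste and Z\'u\~niga \cite{SZ1}, applied to the product $Q_x^t = Q_{i_1}Q_{i_2}\cdots Q_{i_t}$ with $(i_j)$ cycling through $\{1,\dots,n\}$, to obtain
$$\left\|\frac{Q_x^t}{U} - 1\right\|_2^2 \leq \tr\bigl(A^t(A^*)^t\bigr) - 1 \leq \tr\bigl((AA^*)^t\bigr) - 1 = \sum_{i \geq 2}\lambda_i(P)^t,$$
where the second inequality is an Araki--Lieb--Thirring-type trace inequality for non-normal $A$, and the final equality uses $P = AA^*$. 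The appearance of $\lambda_i^t$ in place of the $\lambda_i^{2t}$ that arises in the random-to-random analysis is the source of the factor-of-two slowdown relative to the cutoff time of Theorem~\ref{rtr}.

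Applying a coarse form of Theorem~\ref{rtr} at the time parameter $s = t/2 = \frac{3}{4}n\log n + (c/2)n$, obtainable from the proof of Theorem~\ref{rtr} by dropping the $\log\log$ refinement and working with $\lambda_i^{t}$ rather than $\lambda_i^{2t}$, yields $\sum_{i \geq 2}\lambda_i(P)^t \leq e^{-c}$ for $t = \frac{3}{2}n\log n + cn$, up to an additive $O(n)$ shift in the constant. Combined with the initial $\ell^2$-to-total-variation inequality, this gives the desired bound $4\|Q_x^t - U\|_{\TV}^2 \leq e^{-c}$.

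The main obstacle is the reduction step in the second paragraph: justifying $\|Q_x^t/U - 1\|_2^2 \leq \tr(A^t(A^*)^t) - 1$. One has to translate the specific cyclic ordering of the deterministic projections $Q_i$ in the time-inhomogeneous product into a bound by the averaged random-to-top operator $A$, by showing that the palindromic trace $\tr((Q_x^t)(Q_x^t)^*) = \tr(Q_{i_1}\cdots Q_{i_t} Q_{i_t} \cdots Q_{i_1})$ is dominated by the corresponding trace for $A^t$. Combined with the trace inequality for non-normal $A$, these ingredients (already present in the framework of \cite{SZ1}) deliver the required bound; making this precise in our setting is the technical heart of the proof.
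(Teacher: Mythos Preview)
The paper's proof is two lines: it cites Theorem~4.6 of \cite{SZ1}, which directly gives
\[
\lVert Q_x^{2t} - U\rVert_2 \;\leq\; \lVert P_x^{*t} - U\rVert_2,
\]
and then applies the $\ell^2$ bound established in the proof of Theorem~\ref{rtr}. There is no need to pass through the random-to-top operator $A$, trace inequalities, or an Araki--Lieb--Thirring step; the comparison to random-to-random is handed to us as a black box.

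Your proposal, by contrast, attempts to rederive that comparison, and the gap you yourself flag is real. The inequality $\lVert Q_x^t/U - 1\rVert_2^2 \leq \tr\bigl(A^t(A^*)^t\bigr) - 1$ is not justified, and I do not see a mechanism for it: the $Q_i$ are projections (second singular value equal to $1$), so a product of them has no a priori relation to powers of the averaged random-to-top operator $A$; there is no step-by-step singular-value domination to exploit. The Saloff-Coste--Z\'u\~niga argument does not route through $A$ in this way---it compares the time-inhomogeneous product directly to the time-homogeneous average $P=\frac{1}{n}\sum_i Q_i$, and it is that statement (their Theorem~4.6) one should invoke. Your second inequality $\tr\bigl(A^t(A^*)^t\bigr)\leq \tr\bigl((AA^*)^t\bigr)$ is correct (Weyl majorization), and your handling of the factor-of-two in the time scale is fine, but the reduction preceding it is the whole content of the corollary and remains open in your sketch. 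Simply cite \cite{SZ1} for the comparison and you are done.
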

\begin{proof}
Theorem $4.6$ of \cite{SZ1} says that
$$\lVert Q_{x}^{2t}-U \rVert_{2} \leq \lVert P_{x}^{*t}-U \rVert_{2}$$
where $P$ is the distribution of the generators of random to random. Theorem \ref{rtr} finishes the proof.
\end{proof}

{\it Acknowledgements.} We would like to thank Franco Saliola for several enlightening conversations during the course of the project. We would also like to thank Ton Dieker for his useful comments. We would like to thank a referee for the very short proof of Lemma \ref{positivity}.

 \bibliographystyle{plain}
\bibliography{changes}

\end{document}